\def\mathcal{\mathscr}
\newtheorem{thm}{Theorem}[section]
\newtheorem{lem}[thm]{Lemma}
\newtheorem{prop}[thm]{Proposition}
\newtheorem{conj}[thm]{Conjecture}
\theoremstyle{definition}
\newtheorem{rem}[thm]{Remark}
\newtheorem{defn}[thm]{Definition}
\newtheorem{que}[thm]{Question}
\newcommand{\mca}[1]{{\mathcal{#1}}}
\def\Q{{\mathbb Q}}
\def\Z{{\mathbb Z}}
\def\R{{\mathbb R}}
\def\AS{\text{\rm AS}}
\def\cont{\text{\rm Cont}}
\def\deg{\text{\rm deg}\,}
\def\del{\partial}
\def\dim{\text{\rm dim}\,}
\def\dR{\text{\rm dR}\,} 
\def\ep{\varepsilon} 
\def\CH{\text{\rm CH}}
\def\ecc{\text{\rm ECC}}
\def\ech{\text{\rm ECH}}
\def\fvect{\text{\rm FVect}}
\def\Hom{\text{\rm Hom}} 
\def\id{\text{\rm id}}
\def\image{\text{\rm Im}\,}
\def\ph{\varphi}
\def\spann{\text{\rm span}\,} 
\def\spec{\text{\rm Spec}\,} 
\def\supp{\text{\rm supp}\,}
\begin{document}
\pagestyle{plain} 
\thispagestyle{plain} 

\title[Strong closing property of contact forms and action selecting functors]
{Strong closing property of contact forms and action selecting functors}

\author[Kei Irie]{Kei Irie}
\address{Research Institute for Mathematical Sciences, Kyoto University, Kyoto 606-8502, JAPAN} 
\email{iriek@kurims.kyoto-u.ac.jp} 
\keywords{Reeb dynamics, periodic orbits, closing lemmas} 

\begin{abstract}
We introduce a notion of strong closing property of contact forms, inspired by the $C^\infty$ closing lemma for Reeb flows in dimension three. 
We then prove a sufficient criterion for strong closing property, which is formulated by considering a monoidal functor from a category of manifolds with contact forms to a category of filtered vector spaces. 
As a potential application of this criterion, we propose a conjecture which says that a standard contact form on the boundary of any symplectic ellipsoid satisfies strong closing property. 
\end{abstract} 

\maketitle

\section{Introduction} 

\subsection{Setup and notations} 

Throughout this paper, 
unless otherwise specified, 
all manifolds, bundles and maps between them are of $C^\infty$. 

Let $n \in \Z_{\ge 1}$ and $Y$ be a $2n-1$-dimensional manifold. 
A \textit{contact form} on $Y$ is $\lambda \in \Omega^1(Y)$ 
such that $\lambda \wedge d\lambda^{n-1}(y) \ne 0$ for any $y \in Y$. 
The \textit{Reeb vector field} $R_\lambda \in \mca{X}(Y)$
is defined by equations 
$i_{R_\lambda} d\lambda \equiv 0$
and $\lambda(R_\lambda) \equiv 1$. 

Let $S^1:= \R/\Z$. 
A \textit{periodic Reeb orbit} of $\lambda$ is a map $\gamma: S^1 \to Y$ such that there exists $T_\gamma \in \R_{>0}$ satisfying 
$\dot{\gamma} \equiv T_\gamma \cdot R_\lambda(\gamma)$.   
The set of all periodic Reeb orbits of $\lambda$ is denoted by $P(Y, \lambda)$. 
For any contact form $\lambda$ on $Y$, 
$\xi_\lambda:= \ker \lambda$ is a hyperplane field on $Y$. 
The hyperplane field $\xi_\lambda$ is co-oriented, i.e. 
$TY/\xi_\lambda$ is oriented in the following way: 
for any $v \in TY$, $[v]$ is positive if $\lambda(v)>0$. 

A \textit{contact manifold} is a pair $(Y, \xi)$ such that $\xi$ is a co-oriented hyperplane field on $Y$ 
and there exists a contact from $\lambda$ on $Y$ satisfying $\xi_\lambda = \xi$. 
For any contact manifold $(Y, \xi)$, let 
$\Lambda(Y, \xi):= \{ \lambda \in \Omega^1(Y) \mid \text{ $\xi_\lambda=\xi$ as a co-oriented hyperplane field} \}$.
The map $C^\infty(Y) \to \Lambda(Y, \xi); \, h \mapsto e^h \lambda$ is a bijection for any $\lambda \in \Lambda(Y, \xi)$. 

\begin{rem}\label{rem_order} 
Let us define a partial order $\le$ on $\Lambda(Y,\xi)$ as follows: 
$\lambda \le \lambda'$ if there exists $h \in C^\infty(Y, \R_{\ge 0})$ 
which satisfies $e^h \lambda = \lambda'$. 
Then, $\Lambda(Y, \xi)$ with this partial order $\le$ (as well as its inverse) 
is a directed set. 
\end{rem} 

\subsection{Strong closing property} 

Let us introduce the key definition in this paper. 

\begin{defn}\label{defn_SCP} 
Let $(Y, \xi)$ be a closed contact manifold. 
Let us say that $\lambda \in \Lambda(Y, \xi)$ satisfies 
\textit{positive (resp. negative) strong closing property} if the following holds: 
\begin{quote} 
For any $h \in C^\infty(Y, \R_{\ge 0})$ (resp. $C^\infty(Y, \R_{\le 0})$)
with $h \not\equiv 0$, 
there exist $t \in [0,1]$ and $\gamma \in P(Y, e^{th} \lambda)$
such that $\image \gamma  \cap \supp h \ne \emptyset$. 
\end{quote} 
Let us say that $\lambda$ satisfies \textit{strong closing property} if it satisfies 
positive or negative strong closing property. 
\end{defn} 

\begin{rem} 
If $\lambda$ satisfies strong closing property, 
then $\lambda$ satisfies the following property (which we call local $C^\infty$ closing property):  
\begin{quote} 
If $U$ is any nonempty open set of $Y$ and $\mca{U}$ is any neighborhood of $0$ in $C^\infty(Y)$ with the $C^\infty$ topology, 
then there exist $h \in \mca{U}$ and $\gamma \in P(Y, e^h \lambda)$ such that 
$\supp h \subset U$ and $\image \gamma \cap U \ne \emptyset$. 
\end{quote} 
\end{rem} 

\begin{rem} 
If $\bigcup_{\gamma \in P(Y, \lambda)} \image \gamma $ is dense in $Y$, then $\lambda$ satisfies strong closing property. 
\end{rem} 

\subsection{Results in low dimensions} 

The following result is implicit in the proof of \cite{Irie_dense} Lemma 3.1. 

\begin{thm}[\cite{Irie_dense}]\label{thm_SCP_dim3} 
Any contact form on any closed three-manifold satisfies strong closing property. 
\end{thm}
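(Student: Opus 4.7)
The plan is to argue by contradiction using the ECH spectral invariants on $(Y, \lambda)$, following the argument implicit in \cite[Lemma~3.1]{Irie_dense}. It suffices to establish positive strong closing property; the negative case is symmetric. So fix $h \in C^\infty(Y, \R_{\ge 0})$ with $h \not\equiv 0$ and set $\lambda_t := e^{th}\lambda$ for $t \in [0,1]$. I would assume for contradiction that $\image \gamma \cap \supp h = \emptyset$ for every $t \in [0,1]$ and every $\gamma \in P(Y, \lambda_t)$.

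The main tools are the ECH spectral invariants $c_k(Y, \lambda) \in \R_{\ge 0}$, $k \in \Z_{\ge 0}$, defined on any closed contact $3$-manifold $(Y, \xi)$ for $\lambda \in \Lambda(Y, \xi)$, satisfying: (i) $C^0$-continuity in $\lambda$; (ii) monotonicity with respect to the order on $\Lambda(Y, \xi)$ of Remark~\ref{rem_order}; (iii) spectrality, i.e.\ $c_k(Y, \lambda)$ is a $\Z_{\ge 0}$-linear combination of periods $T_\gamma$ of orbits $\gamma \in P(Y, \lambda)$; and (iv) the Weyl law
\[
\lim_{k \to \infty} \frac{c_k(Y, \lambda)^2}{k} = 2\int_Y \lambda \wedge d\lambda.
\]
An elementary computation gives $\lambda_t \wedge d\lambda_t = e^{2th}\, \lambda \wedge d\lambda$, so the right-hand side of (iv) is strictly increasing in $t$ under our hypotheses on $h$.

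Now I would argue as follows. Under the contradiction hypothesis, every $\gamma \in P(Y, \lambda_t)$ lies in $Y \setminus \supp h$, where $\lambda_t$ coincides with $\lambda$; hence such $\gamma$ is also a periodic Reeb orbit of $\lambda$ with the same period. By (iii), $c_k(Y, \lambda_t)$ therefore lies in the $\Z_{\ge 0}$-semigroup $S$ generated by the action spectrum of $\lambda$. This $S$ is a closed subset of $\R$ of Lebesgue measure zero, hence totally disconnected, so by (i) the continuous function $t \mapsto c_k(Y, \lambda_t)$ must be constant on $[0,1]$. Letting $k \to \infty$ in the identity $c_k(Y, \lambda_0) = c_k(Y, \lambda_1)$ and applying (iv) yields $\int_Y \lambda \wedge d\lambda = \int_Y e^{2h}\, \lambda \wedge d\lambda$, contradicting $h \ge 0$, $h \not\equiv 0$.

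The hard part will be verifying that $S$ is totally disconnected (equivalently, of Lebesgue measure zero) for possibly degenerate $\lambda$, where the action spectrum may be more complicated than the discrete one that appears in the nondegenerate case. I would handle this either by a direct Sard-type argument applied to the action functional on the loop space of $Y$, or by a preliminary approximation in which $\lambda$ is replaced by a nondegenerate contact form and the contradiction hypothesis is transferred to the limit via $C^0$-continuity of both the spectral invariants and the volume.
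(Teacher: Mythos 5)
Your argument is essentially correct, but it is not the route this paper takes: what you have reconstructed is the original proof from \cite{Irie_dense} (the one the theorem is attributed to), whereas Section 4.3 of the paper deduces the theorem from the new criterion, Theorem \ref{mainthm}. The common core is the same in both: under the contradiction hypothesis the invariants take values in $\spec_+(Y,\lambda)$, which is closed and null (Lemma \ref{lem_spec_+}), so by $C^0$-continuity they are constant along $t\mapsto e^{th}\lambda$ (this is exactly Lemma \ref{lem_local_sensitivity}). Where you diverge is in how the constancy is contradicted: you invoke the full Weyl law of \cite{CGHR} and compare the contact volumes of $\lambda$ and $e^h\lambda$ at the endpoints, while the paper never uses the lower bound of the volume theorem. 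Instead it packages the $U$-map as the action of a class $\alpha\in A^+_{\Phi^{\ech}}$ (via \cite{CGHR} Lemma 3.2) and uses only the sublinear upper bound $c_{\sigma_k}(\lambda)=O(\sqrt{k})$ (\cite{CGHR} Proposition 2.1): if no orbit were created, Theorem \ref{mainthm} would force a uniform spectral gap $c_{\sigma_{k+1}}(\lambda)-c_{\sigma_k}(\lambda)\ge \ep|\alpha|>0$, contradicting $O(\sqrt{k})$ growth. Your proof is shorter if one takes the CGHR asymptotics as a black box; the paper's detour buys a criterion (vanishing spectral gap for an action selecting functor) that makes sense in all dimensions, where no Weyl law is available, and which is what Conjecture \ref{conj_ellipse} and \cite{CDPT} exploit. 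One caveat on your final remarks: the measure-zero statement for the period semigroup of a possibly degenerate $\lambda$ is indeed needed, and the Sard-type argument you sketch is the right one (it is \cite{Irie_dense} Lemma 2.2, quoted here as Lemma \ref{lem_spec_+}); by contrast, your alternative fallback of passing to a nondegenerate approximation is delicate, since the hypothesis that all orbits of $e^{th}\lambda$ avoid $\supp h$ is not obviously preserved under such an approximation, so you should rely on the direct argument rather than that reduction.
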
 

The proof in \cite{Irie_dense} used the volume theorem (or Weyl law) for action selectors (or spectral invariants) 
defined from embedded contact homology (ECH), which was proved by \cite{CGHR}. 
Theorem \ref{thm_SCP_dim3} implies density of periodic Reeb orbits 
for a $C^\infty$ generic contact form on any closed contact three-manifold (\cite{Irie_dense} Theorem 1.1). 
This result does not extend to general Hamiltonian systems even in dimension four, 
due to a counterexample by \cite{Herman}. 
On the other hand, the Hamiltonian $C^1$ closing lemma \cite{Pugh_Robinson} 
implies that on any closed contact manifold (of any dimension) equipped with a $C^2$ generic contact form, 
the union of periodic Reeb orbits is dense. 

There are also similar results for surface maps. 
\cite{Asaoka_Irie} proved, as an application of \cite{Irie_dense}, 
a $C^\infty$ closing lemma for Hamiltonian diffeomorphisms of closed symplectic surfaces. 
Recently, \cite{CGPZ} proved an analogue of the volume theorem for spectral invariants defined from periodic Floer homology (PFH), 
and deduced a $C^\infty$ closing lemma for area-preserving diffeomorphisms of closed symplectic surfaces. 
\cite{Edtmair_Hutchings} independently proved an analogue of the volume theorem for PFH spectral invariants 
and $C^\infty$ closing lemmas for area-preserving diffeomorphisms under certain conditions (rationality and $U$-cycle property of Hamiltonian isotopy classes). In particular, \cite{Edtmair_Hutchings} proved a $C^\infty$ closing lemma for area-preserving diffeomorphisms of the symplectic two-torus. Combined with a result in \cite{CGPPZ}, this proof in \cite{Edtmair_Hutchings} extends to closed symplectic surfaces of arbitrary genus. \cite{Edtmair_Hutchings} also proved closing lemmas with quantitative estimates of periods. 

Since ECH and PFH are technologies specific to low dimensions, 
it is highly nontrivial whether the results described above extend to higher dimensions. 
Actually, even the following modest question seems quite nontrivial for current technologies. 

\begin{que}\label{que} 
Does there exist a pair $(Y, \lambda)$ 
such that $Y$ is a closed manifold with $\dim Y \ge 5$, 
$\lambda$ is a contact form on $Y$ which satisfies strong closing property, 
and $\bigcup_{\gamma \in P(Y, \lambda)} \image \gamma$ is not dense in $Y$? 
\end{que} 

\begin{rem} 
Recent papers \cite{CDPT} and \cite{CS} proposed proofs of Conjecture \ref{conj_ellipse}, 
which implies the affirmative answer to Question \ref{que}. 
See also \cite{Xue}, in which a variant of strong closing property for positive definite quadratic Hamiltonians (\cite{Xue} Theorem 1.4) 
is proved via KAM normal form approach. 
\end{rem} 

\subsection{Contents of this paper} 

Motivated by Question \ref{que}, 
and inspired by a perturbation argument in \cite{Irie_dense} and
the proof of the lower bound of the volume theorem for ECH spectral invariants (\cite{CGHR} Section 3), 
we prove a sufficient criterion for strong closing property (Theorem \ref{mainthm}). 
As a potential application of this criterion, 
we propose a conjecture (Conjecture \ref{conj_ellipse}) 
which says that a standard contact form on the boundary of any symplectic ellipsoid satisfies strong closing property.  

\begin{rem} 
\cite{Fish_Hofer} suggests another possible way to prove a $C^\infty$ closing property 
for Hamiltonian dynamics in higher dimensions. 
Namely, it is conjectured (\cite{Fish_Hofer} Conjecture 1.12) that, for any integer $n \ge 2$, 
the $2n$-dimensional symplectic vector space satisfies a $C^\infty$ closing property in the sense of  
\cite{Fish_Hofer} Definition 1.11. 
In \cite{Fish_Hofer} Section 1.4, an idea to attack this conjecture via the theory of feral curves is also discussed. 
\end{rem}

This paper is organized as follows. 
In Section 2, we introduce a notion of contact action selector,  and recall the perturbation argument in \cite{Irie_dense}. 
In Section 3, we introduce a notion of action selecting functor 
as a symmetric (strong) monoidal functor from a category of manifolds with contact forms to a category of filtered vector spaces, 
requiring certain conditions. 
We also explain two examples of action selecting functors defined from ECH and (full) contact homology. 
In Section 4, we state and prove Theorem \ref{mainthm} using the notion of action selecting functor. 
In Section 5, we state Conjecture \ref{conj_ellipse} and very briefly discuss a recently proposed proof \cite{CDPT} of this conjecture. 
In Appendix A, we prove Lemma \ref{lem_filtered_chain_complex}. 
The proof given in the appendix is a straightforward generalization of the argument in \cite{Hutchings_quantitative} Section 5 in a purely algebraic setting. 

{\bf Acknowledgement.} 
I thank Suguru Ishikawa for helpful conversations, 
Helmut Hofer for informing me of \cite{Fish_Hofer}, 
and Julian Chaidez for answering my questions on \cite{CDPT}. 
I also thank the referee for comments which helped me improve the readability of this paper. 
This research is supported by JSPS KAKENHI Grant Number 18K13407 and 19H00636. 

\section{Contact action selectors} 

Let $Y$ be a closed manifold, and $\lambda$ be a contact form on $Y$.
Let $\spec(Y, \lambda)$ denote the set of periods of periodic Reeb orbits of $(Y, \lambda)$: 
\[ 
\spec(Y, \lambda):= \{ T_\gamma \mid \gamma \in P(Y, \lambda) \} \subset \R_{>0}. 
\] 
We also consider the set of finite sums of elements of $\spec(Y, \lambda)$: 
\[ 
\spec_+(Y, \lambda):= \{0\} \cup \bigg\{ \sum_{i=1}^k T_{\gamma_i} \bigg{|}  k \ge 1,  \, \gamma_1, \ldots, \gamma_k \in P(Y, \lambda) \bigg\} \subset \R_{\ge 0}. 
\] 
Note that $\gamma_1, \ldots, \gamma_k$ on the RHS are not assumed to be distinct. 

\begin{lem}\label{lem_spec_+} 
For any $Y$ and $\lambda$ as above, 
$\spec_+(Y, \lambda)$ is a closed set of $\R$. 
It is also a null set with respect to the Lebesgue measure on $\R$. 
\end{lem}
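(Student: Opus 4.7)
The plan is to treat the two claims separately.

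For closedness, I would begin with the observation that periods of periodic Reeb orbits have a uniform positive lower bound $T_\ast > 0$: by compactness of $Y$ and non-vanishing of $R_\lambda$, for $t > 0$ sufficiently small the map $\phi^t \colon Y \to Y$ is fixed-point free. Now let $a_m \to a$ in $\spec_+(Y, \lambda)$ with $a_m = T_{\gamma_1^m} + \cdots + T_{\gamma_{k_m}^m}$; since $k_m T_\ast \le a_m$ the number of summands is uniformly bounded, so after extraction I may assume $k_m = k$ is constant and each $T_{\gamma_i^m}$ converges to some $T_i \ge T_\ast$, with $\sum_i T_i = a$. Parameterizing $\gamma_i^m \colon S^1 \to Y$ to satisfy $\dot{\gamma}_i^m = T_{\gamma_i^m} R_\lambda(\gamma_i^m)$, iterated differentiation of the ODE yields uniform $C^\infty$-bounds, and Arzel\`a--Ascoli produces a $C^\infty$-convergent subsequence with limit $\gamma_i \in P(Y, \lambda)$ of period $T_i$. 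Hence $a = \sum_i T_i \in \spec_+(Y, \lambda)$.

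For nullity, I would decompose
\[
\spec_+(Y, \lambda) = \{0\} \cup \bigcup_{k \ge 1} \spec_+^{(k)}(Y, \lambda), \qquad \spec_+^{(k)} := \Big\{ \sum_{i=1}^k T_{\gamma_i} : \gamma_i \in P(Y, \lambda) \Big\},
\]
so it suffices to show each $\spec_+^{(k)} \cap [0, N]$ is null for every $N$. For this I would perform a finite-dimensional reduction via Poincar\'e sections: cover $Y$ by finitely many flow-box neighborhoods carrying smooth $(2n-2)$-dimensional sections $\Sigma_j$ transverse to $R_\lambda$, with associated smooth return maps and return times. Any periodic orbit of period at most $N$ realizes a closed itinerary of combinatorial length at most $N/T_\ast$ through the $\Sigma_j$'s, and there are only finitely many such itineraries. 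For a given itinerary, the orbits realizing it are fixed points of a corresponding composition $R$ of return maps on some $\Sigma_{j_1}$, and the period is a smooth function of the starting point. Sard's theorem, applied to this finite-dimensional smooth setup, shows that the image of the period on the fixed-point locus of $R$ is null in $\R$; taking $k$-fold sums over the finitely many itineraries then yields nullity of $\spec_+^{(k)} \cap [0, N]$.

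Closedness is a routine compactness argument. The substantive point is nullity, and specifically the following subtlety: in the degenerate (non-Morse-Bott) case, the fixed-point locus of $R$ in $\Sigma_{j_1}$ need not itself be a smooth submanifold, so Sard does not apply directly to the restriction of the period function to that locus. The workaround is to apply Sard to the ambient smooth map $R - \id$ on the smooth disc $\Sigma_{j_1}$ (in local coordinates) and to exploit that the period is constant along each individual Reeb orbit---equivalently, $R_\lambda$ is always a fixed direction of $d\phi^T$---to reduce by one dimension. I expect this Poincar\'e-section reduction to be where the real work lies, although the underlying ideas are classical for smooth flows on compact manifolds.
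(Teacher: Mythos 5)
Your closedness argument is correct and is the standard compactness argument: a uniform positive lower bound on periods bounds the number of summands, and Arzel\`a--Ascoli (or continuous dependence of ODE solutions on parameters and initial conditions) produces limit orbits whose periods sum to the limit value. Note that the paper itself gives no proof of this lemma; it cites \cite{Irie_dense} Lemma 2.2, so the comparison below concerns correctness rather than fidelity to a written argument.

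The nullity half has a genuine gap at its central step, and it is not the subtlety you flag. For a section $\Sigma$ transverse to $R_\lambda$, the composed return map $R$ along an itinerary and its total return time $\tau$ satisfy $d\tau = R^*(\lambda|_{T\Sigma}) - \lambda|_{T\Sigma}$ on the domain of $R$ (this follows from $(\phi^t)^*\lambda = \lambda$ for the Reeb flow $\phi^t$). Hence at a fixed point $x$ one has $d\tau_x = \lambda_x \circ (dR_x - \id)$ on $T_x\Sigma$, which in general does \emph{not} vanish: fixed points of $R$ are not critical points of $\tau$ (and $\lambda|_{T\Sigma}$ cannot vanish on any open set, since $d\lambda|_{T\Sigma}$ is symplectic). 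So Sard applied to $\tau$ says nothing about $\tau(\mathrm{Fix}(R))$; and your proposed workaround---Sard applied to $R-\id$ together with constancy of the period along orbits---does not repair this, because when $0$ is a critical value of $R-\id$, Sard gives no control whatsoever on the measure of $\tau(\mathrm{Fix}(R))$. (The possible non-smoothness of $\mathrm{Fix}(R)$ is a red herring: Sard never requires the critical locus to be a manifold.) The statement you need is true---it is the ``action spectrum is null'' property of the exact symplectomorphism $R$ of $(\Sigma, d\lambda|_{T\Sigma})$ with primitive $\tau$---but its proof requires an additional idea: encode the section-to-section maps by generating functions (equivalently, a discrete action functional on a product of sections), so that closed orbits with a given itinerary become critical points of a smooth function on a finite-dimensional manifold whose critical values are the periods, and apply Sard to \emph{that} function. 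There is a second gap: even granting that each $\tau(\mathrm{Fix}(R))$ is null, your final step ``taking $k$-fold sums over the finitely many itineraries yields nullity of $\spec_+^{(k)}(Y,\lambda) \cap [0,N]$'' is false as stated, since sumsets of Lebesgue-null sets need not be null (a Cantor set plus a Cantor set can be an interval). This is precisely why the critical-value formulation matters: a sum of critical values of $F_1, \ldots, F_k$ is a critical value of the sum function on the product manifold, so a single application of Sard handles $\spec_+^{(k)}(Y,\lambda)$. These two points are the real content behind \cite{Irie_dense} Lemma 2.2, which the paper invokes in place of a proof.
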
 

The proof of Lemma \ref{lem_spec_+} is omitted. 
The proof of \cite{Irie_dense} Lemma 2.2, which is equivalent to Lemma \ref{lem_spec_+} in the case $\dim Y=3$, 
works in arbitrary dimension. 

Let us introduce a notion of contact action selector. 

\begin{defn}\label{defn_action_selector} 
Let $(Y, \xi)$ be a closed contact manifold. 
A \textit{contact action selector} of $(Y, \xi)$ is a map 
$c: \Lambda(Y, \xi) \to \R_{\ge 0}$ 
which satisfies the following properties for any $\lambda \in \Lambda(Y, \xi)$: 
\begin{enumerate} 
\item[(i):] $c(\lambda) \in \spec_+(Y, \lambda)$. 
\item[(ii):] $c(s\lambda) = s c (\lambda)$ for any $s \in \R_{>0}$. 
\item[(iii):] $c(\lambda') \ge c(\lambda)$ for any $\lambda' \in \Lambda(Y, \xi)$ such that $\lambda' \ge \lambda$ (see Remark \ref{rem_order}). 
\end{enumerate} 
Let $\AS(Y, \xi)$ denote the set of contact action selectors of $(Y, \xi)$. 
\end{defn} 

The properties (i), (ii), and (iii) are called spectrality, conformality and monotonicity, respectively. 
Any contact action selector $c$ satisfies the following property: 
\begin{itemize} 
\item[(iv):] If a sequence $(h_j)_{j \ge 1}$ in $C^\infty(Y) $ satisfies $\lim_{j \to \infty} \| h_j \|_{C^0}=0$, 
then $\lim_{j \to \infty} c ( e^{h_j} \lambda) = c(\lambda)$. 
\end{itemize} 
Indeed, by conformality and monotonicity, 
$e^{\min h_j} c(\lambda) \le c(e^{h_j}  \lambda) \le e^{\max h_j} c(\lambda)$ for any $j$. 
The property (iv) is called $C^0$ continuity. 

The following lemma is essentially equivalent to the perturbation argument in \cite{Irie_dense}. 

\begin{lem}\label{lem_local_sensitivity} 
Let $(Y, \xi)$ be a closed contact manifold and $\lambda \in \Lambda(Y, \xi)$. 
\begin{itemize} 
\item[(i):]  Suppose that, for any $h \in C^\infty(Y, \R_{\ge 0}) \setminus \{0\}$, 
there exists $c \in \AS(Y, \xi)$ which satisfies 
$c(e^h \lambda) > c(\lambda)$. 
Then $\lambda$ satisfies positive strong closing property. 
\item[(ii):]  Suppose that, for any $h \in C^\infty(Y, \R_{\le 0}) \setminus \{0\}$, 
there exists $c \in \AS(Y, \xi)$ which satisfies 
$c(e^h \lambda) < c(\lambda)$. 
Then $\lambda$ satisfies negative strong closing property. 
\end{itemize} 
\end{lem}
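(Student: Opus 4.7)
The plan is a proof by contradiction using the closed null set structure of $\spec_+(Y,\lambda)$ given by Lemma \ref{lem_spec_+}. I treat (i); case (ii) is completely parallel.

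Fix $h \in C^\infty(Y, \R_{\ge 0}) \setminus \{0\}$ and suppose, for contradiction, that for every $t \in [0,1]$ and every $\gamma \in P(Y, e^{th}\lambda)$ one has $\image \gamma \cap \supp h = \emptyset$. The first step is to reduce the spectrum: on the open set $Y \setminus \supp h$ the function $h$ vanishes together with $dh$, so $e^{th}\lambda = \lambda$ and $d(e^{th}\lambda) = d\lambda$ there, and consequently $R_{e^{th}\lambda} = R_\lambda$ on $Y \setminus \supp h$. Under the contradiction hypothesis every $\gamma \in P(Y, e^{th}\lambda)$ has image in $Y \setminus \supp h$, so $\gamma$ belongs to $P(Y, \lambda)$ with the same period. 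Hence
\[
\spec_+(Y, e^{th}\lambda) \subset \spec_+(Y, \lambda) \qquad \text{for every } t \in [0,1].
\]

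Now invoke the hypothesis to pick $c \in \AS(Y, \xi)$ with $c(e^h \lambda) > c(\lambda)$, and define $f \colon [0,1] \to \R_{\ge 0}$ by $f(t) := c(e^{th}\lambda)$. Since $h \ge 0$, the order on $\Lambda(Y, \xi)$ gives $e^{t_1 h}\lambda \le e^{t_2 h}\lambda$ whenever $t_1 \le t_2$, so monotonicity (iii) makes $f$ non-decreasing. Writing $e^{th}\lambda = e^{(t-s)h}(e^{sh}\lambda)$ and observing $(t-s)h \to 0$ in $C^0$ as $s \to t$, the $C^0$ continuity (iv) makes $f$ continuous. By the reduction above, $f([0,1]) \subset \spec_+(Y, \lambda)$.

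To conclude, note that $f([0,1])$ is the continuous image of a connected set, hence a connected subset of $\R$, hence an interval. By Lemma \ref{lem_spec_+}, $\spec_+(Y, \lambda)$ has Lebesgue measure zero and therefore contains no interval of positive length, so $f([0,1])$ is a single point. This contradicts $f(1) = c(e^h\lambda) > c(\lambda) = f(0)$, completing the proof of (i). For (ii), one uses $h \le 0$, so $t \mapsto e^{th}\lambda$ is non-increasing and $f$ is non-increasing; the rest of the argument, together with $f(1) < f(0)$ from the hypothesis, is identical.

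The only subtle step is the spectral reduction: it is crucial to use $\supp h$ (the \emph{closure} of $\{h \ne 0\}$) rather than $\{h \ne 0\}$ itself, since outside $\supp h$ the function $h$ vanishes on an open neighborhood of each point, which ensures $dh \equiv 0$ there and hence that the Reeb vector fields genuinely coincide. Once this identification of Reeb vector fields is in place, the rest of the argument is the familiar ``monotone, continuous, null-valued, hence constant'' reasoning that underlies \cite{Irie_dense}.
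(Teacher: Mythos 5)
Your proof is correct and follows essentially the same route as the paper: under the contradiction hypothesis the periodic orbits of $e^{th}\lambda$ are orbits of $\lambda$, so spectrality places $t\mapsto c(e^{th}\lambda)$ in the closed null set $\spec_+(Y,\lambda)$, and $C^0$ continuity forces this function to be constant, contradicting $c(e^h\lambda)>c(\lambda)$. The only cosmetic differences are that you prove the inclusion $\spec_+(Y,e^{th}\lambda)\subset\spec_+(Y,\lambda)$ rather than the equality used in the paper (the inclusion suffices) and you add a monotonicity observation that the argument does not actually need.
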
 
\begin{proof} 
We only prove (i), since the proof of (ii) is similar to that. 
First note that (i) is equivalent to the following: 
if $\lambda$ does not satisfy positive strong closing property, 
then there exists $h \in C^\infty(Y, \R_{\ge 0}) \setminus \{0\}$ 
such that $c(e^h\lambda )= c(\lambda)$ for any $c \in \AS(Y, \xi)$. 

If $\lambda$ does not satisfy positive strong closing property, there exists $h \in C^\infty(Y, \R_{\ge 0}) \setminus \{0\}$
such that the following holds: 
$t \in [0,1], \, \gamma \in P(Y, e^{th} \lambda) \implies \image \gamma \cap \supp h  = \emptyset$. 

Let $\lambda_t:= e^{th} \lambda$. 
Since $\lambda_t \equiv \lambda$ on $Y \setminus \supp h$, 
we obtain 
$P(Y, \lambda_t) = P(Y, \lambda)$ and 
$\spec_+(Y, \lambda_t) = \spec_+(Y, \lambda)$ for any $t \in [0,1]$. 
Let $c \in \AS(Y, \xi)$. 
By spectrality, 
$c(\lambda_t) \in \spec_+(Y, \lambda_t) = \spec_+(Y, \lambda)$ for any $t$. 
By $C^0$ continuity, $c(\lambda_t)$ is continuous on $t$. 
Since $\spec_+(Y,\lambda)$ is a null set, 
$c(\lambda_t)$ is constant on $t$, in particular $c(e^h\lambda) = c(\lambda)$. 
\end{proof}

\section{Action selecting functors} 

In this section, we introduce a notion of \textit{action selecting functor} (of dimension $2n-1$ and coefficients in $K$) 
as a symmetric (strong) monoidal functor $\cont_{2n-1} \to \fvect^{\Z/2}_K$, 
requiring certain conditions. 

In Section 3.1, we explain the definition of $\cont_{2n-1}$, 
which is a category whose objects are closed $2n-1$-dimensional manifolds with contact forms. 
In Section 3.2, we explain the definition of $\fvect^{\Z/2}_K$, 
which is a category whose objects are $\Z/2$-graded and filtered $K$-vector spaces. 
In Section 3.3, we explain the definition of an action selecting functor. 
In Section 3.4, we explain the definition of a contact action selector associated to any action selecting functor. 
In Section 3.6, we introduce a subcategory $\cont^0_{2n-1} \subset \cont_{2n-1}$ to prepare for 
Section 3.7, in which we explain two examples of action selecting functors $\Phi^{\ech}$ and $\Phi^{\CH}$, 
which are defined respectively by ECH and (full) contact homology. 

\subsection{Category $\cont_{2n-1} $} 

Let $n \in \Z_{\ge 1}$. The goal of this subsection is to define a symmetric nonunital monoidal category $\cont_{2n-1}$. 

An object of $\cont_{2n-1} $ is a pair $(Y, \lambda)$ such that $Y$ is a closed $2n-1$-dimensional manifold 
and $\lambda$ is a contact form on $Y$. 

For any $(Y_+, \lambda_+), (Y_-, \lambda_-) \in \cont_{2n-1} $, 
a generalized exact cobordism from $(Y_+, \lambda_+)$ to $(Y_-, \lambda_-)$
is a tuple 
$(X, \lambda, i_+, i_-)$ which satisfies the following conditions: 
\begin{itemize} 
\item $X$ is a $2n$-dimensional manifold without boundary and $\lambda\in \Omega^1(X)$. 
Moreover, $d\lambda$ is nondegenerate (thus symplectic). 
\item $i_+: Y_+ \times \R  \to X $ is an open embedding such that $i_+^* \lambda = e^r \lambda_+$, where $r$ denotes the coordinate on $\R$. 
\item $i_-: Y_- \times \R \to X$ is an open embedding such that $i_-^*\lambda = e^r \lambda_-$. 
\item $U_+:= i_+(Y_+ \times \R_{>0})$ and $U_-:= i_-(Y_- \times \R_{<0})$ are disjoint, and $X \setminus  (U_+ \cup U_-)$ is compact. 
\end{itemize} 
Generalized exact cobordisms 
$(X, \lambda, i_+, i_-)$ and $(X', \lambda', i'_+, i'_-)$
are equivalent 
if there exists a diffeomorphism $\ph: X \to X'$ such that 
$d(\ph^*\lambda' - \lambda)=0$, 
$\ph \circ i_+ = i'_+$, 
$\ph \circ i_- = i'_-$, 
and 
\[ 
\supp (\ph^*\lambda' - \lambda) \cap ( i_+(Y_+ \times \R_{\ge 0}) \cup i_-(Y_- \times \R_{\le 0})) = \emptyset. 
\] 
We define $\Hom( (Y_+, \lambda_+), (Y_-, \lambda_-))$ 
to be the set of equivalence classes of generalized exact cobordisms.

For any $(Y, \lambda), (Y', \lambda'), (Y'', \lambda'') \in \cont_{2n-1}$, the composition 
\[ 
\Hom ((Y', \lambda'), (Y, \lambda)) \times \Hom ( (Y'', \lambda''), (Y', \lambda')) \to \Hom( (Y'', \lambda''), (Y, \lambda)); \,
(\mu, \mu') \mapsto \mu \circ \mu' 
\] 
is defined as follows. Take $(X, \lambda, i_+, i_-)$ and $(X', \lambda', i'_+, i'_-)$ so that 
\[ 
\mu = [(X, \lambda, i_+, i_-)], \qquad \mu' = [(X', \lambda', i'_+, i'_-)]. 
\] 
Let us consider an equivalence relation $\sim$ on $X \sqcup X'$ generated by 
$i_+(y,r) \sim i'_-(y,r)$, where $(y,r)$ runs over all elements of $Y' \times \R$. 
Let $X'':= (X \sqcup X')/\sim$ and $\pi: X \sqcup X' \to X''$ be the canonical map. 
Let 
$\lambda'':= \pi_! ( \lambda \sqcup \lambda')$, 
$i''_+:= \pi \circ i'_+$, and $i''_-:= \pi \circ i_-$. 
Then we define $\mu \circ \mu':= [( X'', \lambda'', i''_+, i''_-)]$. 

For any $(Y, \lambda), (Y, \lambda') \in \cont_{2n-1} $ such that 
$\lambda \le \lambda'$, i.e. 
there exists $h \in C^\infty(Y, \R_{\ge 0})$ 
which satisfies $\lambda' = e^h \lambda$, 
we define $\mu_{\lambda \lambda'} \in \Hom( (Y, \lambda'), (Y, \lambda))$ by 
\begin{equation}\label{eqn_mu} 
\mu_{\lambda \lambda'}:= [ (Y \times \R, e^r\lambda, i_+, i_-)] 
\end{equation}
where $i_\pm$ are defined as follows: 
\begin{align*} 
i_+&: Y \times \R  \to Y \times \R; \, (y,r) \mapsto (y, h(y) + r ), \\ 
i_-&: Y \times \R \to Y\times \R; \, (y,r) \mapsto (y, r). 
\end{align*} 
For any $(Y, \lambda)\in \cont_{2n-1} $, we define 
$\id_{(Y, \lambda)}:= \mu_{\lambda \lambda}$. 

Now we have defined the category $\cont_{2n-1}$. 
It has a natural symmetric nonunital monoidal structure, 
where the monoidal product is defined by taking disjoint unions.

\subsection{Category $\fvect^{\Z/2}_K$} 

Let $K$ be a field. 
In this subsection, we define a symmetric monoidal category $\fvect^{\Z/2}_K$. 
We also explain a few facts about filtered vector spaces. 

Let $V$ be a vector space over $K$. 
In this paper, a filtration on $V$ means a family $(F^aV)_{a \in (-\infty, \infty]}$ which satisfies the following conditions: 
\begin{itemize} 
\item $F^0 V = \{0\}$, $F^\infty V = V$. 
\item $a \le b \implies F^aV \subset F^bV$. 
\item For any $a \in (-\infty, \infty]$, there holds $\bigcup_{b < a} F^b V = F^a V$. 
\end{itemize} 

When $V$ is $\Z/2$-graded, i.e. equipped with a direct sum decomposition $V = V_0 \oplus V_1$, 
we also require that
$F^a V = (F^a V \cap V_0) \oplus (F^aV \cap V_1)$ holds for any $a$. 

For any filtered vector spaces $V$ and $W$, 
we say that a linear map $f: V \to W$ preserves filtrations if 
$f(F^aV) \subset F^a W$ for any $a$.

For any filtered vector space $V$, let 
$\spec (V):= \{ a \in \R \mid \forall \ep  \in \R_{>0} , \, F^{a-\ep} V \subsetneq F^{a+\ep} V\}$. 
For any $v \in V$, let $|v|:= \inf\{ a \mid v \in F^a V\}$. 
The following properties are easy to confirm: 
\begin{itemize} 
\item $\spec (V)$ is a closed subset of $\R_{\ge 0}$. 
\item If $a \le  b$ and $[a,b] \cap \spec(V) = \emptyset$, then $F^a V= F^b V$. 
\item $|v| \in \spec(V)$ for any $v \in V \setminus \{0\}$.
\item $|0| = -\infty$. 
\end{itemize} 

Let us define the category $\fvect^{\Z/2}_K$. 
Objects of $\fvect^{\Z/2}_K$ are filtered and $\Z/2$-graded $K$-vector spaces, 
and morphisms are degree $0$ linear maps which preserve filtrations. 
The monoidal product is defined by taking tensor products. 
Namely, for any filtered vector spaces $V$ and $W$, 
the filtration on $V \otimes W$ is defined as follows: 
\begin{align*} 
F^a (V \otimes W):&=  \spann \{ F^b V \otimes F^c W \mid b+c \le a \} \\ 
&= \spann \{ v \otimes w \mid v \in V, \, w \in W, \, |v| + |w| < a\}. 
\end{align*} 
Then $\fvect^{\Z/2}_K$ has a natural symmetric monoidal structure
with isomorphisms 
\[ 
V \otimes W \cong W \otimes V; \, v \otimes w \mapsto (-1)^{\deg(v) \deg(w) } w \otimes v, 
\] 
where $\deg$ denotes $\Z/2$-grading. 

\begin{rem}\label{rem_ungraded} 
We also consider a subcategory $\fvect_K \subset \fvect^{\Z/2}_K$ 
which consists of $V \in \fvect^{\Z/2}_K$ such that 
$V_0 = V$ and $V_1 = 0$. 
\end{rem} 

Let us now explain a few facts about filtered vector spaces. 

For any filtered $K$-vector space $V$, let $V^\vee:= \Hom_K(V, K)$. 
For any $\ph \in V^\vee \setminus \{0\}$, let 
$| \ph|:= \sup \{ a \mid  \ph|_{F^aV} = 0 \}$. 
It is easy to see that $|\ph| \in \spec (V)$.

\begin{lem}\label{lem_level_decreasing} 
Let $V$ and $W$ be filtered $K$-vector spaces, 
and let $\ph \in V^\vee$. 
Let us define $i_\ph: V \otimes W \to W$ by 
$i_\ph (v \otimes w) := \ph(v) \cdot w$. 
If $\ph \ne 0$, then 
$|i_\ph(x)| \le |x|- |\ph|$ for any $x \in V \otimes W$. 
\end{lem}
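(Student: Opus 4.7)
The plan is to unwind the definitions, relying on a compatibility between $\ph$ and the filtration on $V$ that the hypothesis $\ph \ne 0$ forces. The key preliminary observation I would establish is that
\[
\ph(v) \ne 0 \implies |v| \ge |\ph|.
\]
Although $|\ph| = \sup\{a \mid \ph|_{F^a V} = 0\}$ only immediately gives $\ph|_{F^b V} = 0$ for $b < |\ph|$, the left-continuity axiom $\bigcup_{b < a} F^b V = F^a V$ upgrades this to $\ph|_{F^{|\ph|} V} = 0$, since any $v \in F^{|\ph|} V$ already lies in some $F^b V$ with $b < |\ph|$. Consequently $\ph(v) \ne 0$ forces $v \notin F^{|\ph|} V$, i.e.\ $|v| \ge |\ph|$.

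With this in hand, I would reduce the lemma to the statement that $i_\ph$ sends $F^a(V \otimes W)$ into $F^{a - |\ph|} W$ for every $a$. Writing any $x \in F^a(V \otimes W)$ in the spanning form $\sum_i v_i \otimes w_i$ with $|v_i| + |w_i| < a$, one can drop the terms for which $\ph(v_i) = 0$ since those contribute nothing to $i_\ph(x)$. For each remaining $i$ the preliminary step gives $|v_i| \ge |\ph|$, whence $|w_i| < a - |\ph|$ and $w_i \in F^{a - |\ph|} W$. Therefore $i_\ph(x) = \sum_i \ph(v_i) w_i \in F^{a - |\ph|} W$, as wanted.

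Applied to $x$ and letting $a$ approach $|x|$ from above, this yields $|i_\ph(x)| \le |x| - |\ph|$, finishing the proof. I do not see a genuine obstacle here: the argument is pure bookkeeping, and the only point requiring a moment of care is the use of left-continuity to conclude that $\ph$ actually vanishes on $F^{|\ph|} V$ itself, not merely on strictly smaller filtration levels.
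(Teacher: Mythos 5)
Your proof is correct and follows essentially the same argument as the paper: expand $x$ in the spanning form of the tensor filtration, discard the terms with $\ph(v_i)=0$, and bound $|w_i|$ to place $i_\ph(x)$ in the appropriate filtration level. The only (harmless) difference is that you invoke the axiom $\bigcup_{b<a}F^bV=F^aV$ to get $\ph|_{F^{|\ph|}V}=0$ exactly at level $|\ph|$, whereas the paper sidesteps this by approximating $|\ph|$ from below with $b<|\ph|$ alongside $a>|x|$.
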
 
\begin{proof} 
It is sufficient to show the following claim for any $a,b \in \R$: 
\[ 
|x| < a, \, |\ph| > b \implies |i_\ph(x) | \le a - b. 
\] 
By $|x|<a$, 
one has $x = \sum_i c_i \cdot v_i \otimes w_i$ 
such that $\max_i  (|v_i| + |w_i|)  < a$. 
Then $i_\ph(x) = \sum_i c_i \cdot \ph(v_i) \cdot w_i$. 
By $|\ph|>b$, 
if $\ph(v_i) \ne 0$
then $|v_i| \ge b$, 
which implies $|w_i| < a-b$. 
Thus we obtain $|i_\ph(x)| < a-b$. 
\end{proof} 

A filtered, $\Z/2$-graded chain complex is a pair $(V, \del_V)$ such that 
$V \in \fvect^{\Z/2}_K$
and $\del_V: V_* \to V_{*-1}$ is a linear map 
which preserves the filtration. 
For any such $(V, \del_V)$, its homology $H_*(V)$ has a filtration defined by 
$F^a H_*(V):= \{ [v] \in H_*(V) \mid v \in F^a V\}$ for any $a$, 
thus $H_*(V) \in \fvect^{\Z/2}_K$. 

\begin{lem}\label{lem_filtered_chain_complex} 
Let $(V, \del_V)$ and $(W, \del_W)$ be filtered, $\Z/2$-graded chain complexes. 
Consider the product complex $(V \otimes W, \del_{V \otimes W})$ 
which is defined by 
\[ 
\del_{V \otimes W}(v \otimes w):= \del_V v \otimes w + (-1)^{\deg(v)} v \otimes \del_W w. 
\] 
If $\dim F^a H_*(V), \dim F^a H_*(W) < \infty$ for any $a \in \R$, then 
\[ 
F^a H_*(V \otimes W)  = \spann \{F^b H_*(V) \otimes F^c H_*(W) \mid b+c \le a \}
\]  
for any $a \in (-\infty, \infty]$. 
\end{lem}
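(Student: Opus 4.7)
The plan is to prove both inclusions. The inclusion $\supseteq$ is immediate: given $[v] \in F^b H_*(V)$ and $[w] \in F^c H_*(W)$ with $b+c \le a$, choose cycle representatives $v \in F^b V$ and $w \in F^c W$; then $v \otimes w \in F^{b+c}(V \otimes W) \subseteq F^a(V \otimes W)$ is a cycle, so $[v] \otimes [w] = [v \otimes w] \in F^a H_*(V \otimes W)$.

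For the inclusion $\subseteq$, I would use Lemma \ref{lem_level_decreasing} together with dual cocycles, whose construction is enabled by the finite-dimensionality hypothesis. Since $\dim F^a H_*(V) < \infty$ for every $a \in \R$, the spectrum of $H_*(V)$ is locally finite, and an inductive construction produces a filtered basis $\{\alpha_k\}$ of $H_*(V)$, meaning a basis with $F^a H_*(V) = \spann\{\alpha_k \mid |\alpha_k| < a\}$ for every $a$. Lift each $\alpha_k$ to a cycle $v_k \in V$, and similarly construct $\{\beta_l\}$ of $H_*(W)$ with lifts $w_l \in W$. The classical K\"unneth isomorphism over the field $K$ identifies $H_*(V \otimes W)$ with $H_*(V) \otimes H_*(W)$, and under this identification $F^a(H_*(V) \otimes H_*(W))$ coincides with $\spann\{\alpha_k \otimes \beta_l \mid |\alpha_k| + |\beta_l| < a\}$, as $\{\alpha_k \otimes \beta_l\}$ is a filtered basis of the tensor product filtration.

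Given a cycle $\omega \in F^a(V \otimes W)$, write $[\omega] = \sum_{(k,l)} c_{kl}\, \alpha_k \otimes \beta_l$, a finite sum in $H_*(V) \otimes H_*(W)$. To finish, it suffices to show $c_{kl} = 0$ whenever $|\alpha_k| + |\beta_l| \ge a$. For each such $(k,l)$, I would construct a cocycle $\phi_k \in V^\vee$ with $\phi_k(v_m) = \delta_{km}$ for all $m$, $\phi_k|_{\del_V(V)} = 0$, and $\phi_k|_{F^{|\alpha_k|} V} = 0$ (so $|\phi_k| \ge |\alpha_k|$ in $V^\vee$); the filtered-basis property makes these constraints consistent, and a linear-algebraic extension then produces $\phi_k$. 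Construct $\psi_l \in W^\vee$ analogously. Applying Lemma \ref{lem_level_decreasing} twice yields
\[
|(\phi_k \otimes \psi_l)(\omega)| \le |\omega| - |\phi_k| - |\psi_l| < a - |\alpha_k| - |\beta_l| \le 0,
\]
so $(\phi_k \otimes \psi_l)(\omega) = 0$. By the compatibility of the K\"unneth isomorphism with cohomology-homology duality over a field, this pairing equals $c_{kl}$, forcing $c_{kl} = 0$ and hence $[\omega] \in F^a(H_*(V) \otimes H_*(W))$.

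The main technical point to verify, which I expect to be routine, is the consistency of the constraints defining $\phi_k$, namely that $v_k \notin F^{|\alpha_k|} V + \del_V(V) + \spann\{v_m \mid m \ne k\}$. Any such expression would, after passing to homology, yield $\alpha_k \in F^{|\alpha_k|} H_*(V) + \spann\{\alpha_m \mid m \ne k\}$; but expanding the first summand in the filtered basis $\{\alpha_p\}$ produces only terms with $|\alpha_p| < |\alpha_k|$, so the coefficient of $\alpha_k$ on the right side is $0$ while on the left it is $1$, a contradiction. The remaining ingredients---the K\"unneth isomorphism over a field, Lemma \ref{lem_level_decreasing}, and the cohomology-duality pairing---are standard.
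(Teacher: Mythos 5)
Your argument is correct, and it is essentially a fleshed-out version of what the paper only sketches: the paper's own ``proof'' is a pointer to Step 2 of the proof of Proposition 4.6 in \cite{Hutchings_quantitative}, with the remark that the finite-dimensionality hypothesis is exactly what produces an action-minimizing basis of the homology --- this is the same ``filtered basis'' $\{\alpha_k\}$ you construct, and your local-finiteness observation about $\spec$ is the right justification. Where your route is genuinely your own is in how the hard inclusion is closed: you pair a cycle $\omega \in F^a(V\otimes W)$ against functionals $\phi_k\otimes\psi_l$ annihilating boundaries and the relevant filtration levels, and you reuse Lemma \ref{lem_level_decreasing} (which the paper proves for use elsewhere, e.g.\ in Theorem \ref{mainthm}, not for this lemma); the payoff is a self-contained proof instead of a citation, at the cost of a little duality bookkeeping. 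Two points you should make explicit, though neither is a gap: (1) in the consistency check, in a putative relation $v_k = u + \del_V x + \sum_{m\ne k} c_m v_m$ with $u \in F^{|\alpha_k|}V$, the element $u$ is automatically a cycle (a difference of cycles), which is what legitimizes ``passing to homology'' and gives $[u]\in F^{|\alpha_k|}H_*(V)$, after which your coefficient comparison in the filtered basis finishes the contradiction; (2) the ``second application'' of Lemma \ref{lem_level_decreasing} is cleaner stated directly: $|i_{\phi_k}(\omega)| \le |\omega| - |\phi_k| < a - |\alpha_k| \le |\beta_l|$, so $i_{\phi_k}(\omega) \in F^{|\beta_l|}W$ and is killed by $\psi_l$ (otherwise one must equip $K$ with a filtration to make your displayed inequality literal). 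With these glosses, and the standard K\"unneth isomorphism over the field $K$ identifying $[v_k\otimes w_l]$ with $\alpha_k\otimes\beta_l$, the argument is complete.
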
 

One can prove Lemma \ref{lem_filtered_chain_complex} 
by a straightforward generalization of Step 2 in the proof of \cite{Hutchings_quantitative} Proposition 4.6. 
For the convenience of the reader, we explain a detailed proof in Appendix A.

\subsection{Action selecting functors} 

Let $n \in \Z_{\ge 1}$ and $K$ be a field. 

\begin{defn}\label{defn_action_selecting_functor} 
An \textit{action selecting functor} 
of dimension $2n-1$ and coefficients in $K$ 
is a symmetric strong monoidal functor 
$\Phi: \cont_{2n-1} \to \fvect^{\Z/2}_K$ such that, 
for any $(Y, \lambda) \in \cont_{2n-1}$ 
and $a,b \in \R_{>0}$ 
satisfying $a<b$ and 
$[a,b] \cap \spec_+(Y, \lambda)=\emptyset$, 
the following holds: 
\begin{itemize} 
\item  $F^a\Phi(Y, \lambda) = F^b\Phi(Y, \lambda)$. 
\item Let $c:=a/b$. 
Then, the linear map
$\Phi(\mu_{c\lambda, \lambda})|_{F^a \Phi (Y,\lambda)}: F^a\Phi(Y, \lambda) \to F^a \Phi(Y, c\lambda)$ is an isomorphism. 
See (\ref{eqn_mu}) for the definition of the morphism $\mu_{c\lambda, \lambda}$. 
\end{itemize} 
\end{defn} 

\begin{rem} 
By a ``strong'' monoidal functor, we mean that  the map 
\[ 
\Phi(Y_1, \lambda_1) \otimes \Phi(Y_2, \lambda_2) \to \Phi(Y_1 \sqcup Y_2, \lambda_1 \sqcup \lambda_2)
\] 
is an isomorphism of $\fvect^{\Z/2}_K$ for any $(Y_1, \lambda_1), (Y_2, \lambda_2) \in \cont_{2n-1}$. 
\end{rem} 

Let us examine some consequences of Definition \ref{defn_action_selecting_functor}. 
In the rest of this subsection, $\Phi$ denotes an action selecting functor of dimension $2n-1$ and coefficients in $K$. 

\begin{lem}\label{lem_AS_functor_spec} 
For any $(Y, \lambda) \in \cont_{2n-1}$, 
there holds $\spec \Phi(Y,\lambda) \subset \spec_+(Y,\lambda)$. 
\end{lem}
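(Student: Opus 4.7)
The plan is to prove the contrapositive: if $a \in \R$ is not in $\spec_+(Y,\lambda)$, then $a \notin \spec \Phi(Y,\lambda)$. Since $\spec \Phi(Y,\lambda)$ is already contained in $\R_{\ge 0}$ (as noted among the general facts about filtered vector spaces), it suffices to consider $a \in \R_{\ge 0}$.

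The case $a = 0$ is vacuous since $0 \in \spec_+(Y,\lambda)$ by definition. So assume $a > 0$ and $a \notin \spec_+(Y, \lambda)$. Here I would invoke Lemma \ref{lem_spec_+} to know that $\spec_+(Y, \lambda)$ is closed in $\R$. Hence there exists $\ep \in \R_{>0}$ small enough that both $a - \ep > 0$ and $[a-\ep, a+\ep] \cap \spec_+(Y,\lambda) = \emptyset$. Now apply the defining property of an action selecting functor (Definition \ref{defn_action_selecting_functor}) to the pair $(a-\ep, a+\ep)$ of positive reals: this yields $F^{a-\ep}\Phi(Y,\lambda) = F^{a+\ep}\Phi(Y,\lambda)$, which by the definition of $\spec \Phi(Y,\lambda)$ shows that $a \notin \spec \Phi(Y,\lambda)$.

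There is no real obstacle here: the argument is a direct unwinding of Definition \ref{defn_action_selecting_functor} combined with the closedness of $\spec_+(Y,\lambda)$ from Lemma \ref{lem_spec_+}. The only subtlety is remembering that the action-selecting-functor axiom requires both endpoints of the gap interval to be strictly positive, which is why the $a = 0$ case must be handled (trivially) by hand, and why one shrinks $\ep$ so that $a - \ep > 0$.
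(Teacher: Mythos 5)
Your proof is correct and follows essentially the same route as the paper: use closedness of $\spec_+(Y,\lambda)$ (Lemma \ref{lem_spec_+}) to find a gap interval $[a-\ep,a+\ep]$ avoiding $\spec_+(Y,\lambda)$ with $a-\ep>0$, then apply the defining axiom of an action selecting functor to conclude $F^{a-\ep}\Phi(Y,\lambda)=F^{a+\ep}\Phi(Y,\lambda)$, hence $a\notin\spec\Phi(Y,\lambda)$. Your explicit treatment of the cases $a\le 0$ (via $\spec\Phi(Y,\lambda)\subset\R_{\ge 0}$ and $0\in\spec_+(Y,\lambda)$) is a minor addition the paper leaves implicit.
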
 
\begin{proof} 
Recall that $\spec_+(Y,\lambda)$ is closed (Lemma \ref{lem_spec_+}). 
For any $a \in \R_{>0} \setminus \spec_+(Y,\lambda)$, 
there exists $\ep \in \R_{>0}$ such that 
$[a-\ep, a+\ep] \cap \spec_+(Y, \lambda) = \emptyset$. 
Then $F^{a-\ep}\Phi(Y,\lambda) = F^{a+\ep}\Phi(Y, \lambda)$, 
which implies $a \not\in \spec \Phi(Y,\lambda)$. 
\end{proof} 

For any 
$(Y, \lambda) \in \cont_{2n-1}$, 
$a \in (0, \infty]$ and $s \in (0,1]$, 
Let us define an isomorphism 
\begin{equation}\label{eqn_scaling_isom} 
\ph_{s,a} : F^a \Phi(Y, \lambda)  \to  F^{sa} \Phi (Y, s\lambda) 
\end{equation}
as follows. 
First we consider the case $a \notin \spec_+(Y, \lambda) \cup \{\infty\}$. 
Then we can take a sequence $(s_0, s_1, \ldots, s_N)$ such that 
$1=s_0> s_1 > \cdots > s_N=s$ 
and 
$[a, (s_{i-1}/s_i) a] \cap \spec_+(Y, \lambda) = \emptyset$
for any $i$. 
This condition is equivalent to 
$[s_ia , s_{i-1}a] \cap \spec(Y, s_i \lambda) = \emptyset$, 
thus 
$F^{s_i a} \Phi(Y, s_i \lambda) = F^{s_{i-1} a } \Phi(Y, s_i \lambda)$. 
Then we can define an isomorphism 
\[ 
\ph_i: F^{s_{i-1}a} \Phi (Y, s_{i-1} \lambda) \to F^{s_{i-1}a}(Y, s_i \lambda) =  F^{s_ia}  (Y, s_i \lambda), 
\] 
where the first map is a restriction of $\Phi(\mu_{s_i \lambda, s_{i-1}\lambda})$. 
Then let us  define 
\[ 
\ph_{s,a}:= \ph_N \circ \cdots \circ \ph_1.
\] 
It is easy to check that the RHS does not depend on the choice of $s_1, \ldots, s_N$. 

For any $a \le a'$ such that $a, a' \notin \spec_+(Y, \lambda) \cup \{\infty \}$, 
the diagram 
\begin{equation}\label{diagram_scaling_isom} 
\xymatrix{ 
F^a \Phi(Y, \lambda)  \ar[r]^-{\ph_{s,a}} \ar[d]& F^{sa} \Phi(Y, s\lambda)  \ar[d]\\ 
F^{a'} \Phi(Y, \lambda) \ar[r]_-{\ph_{s,a'}} & F^{sa'} \Phi(Y, s\lambda) 
}
\end{equation}
commutes, where vertical maps are inclusion maps. 

Let us define the map (\ref{eqn_scaling_isom}) for any $a \in (0, \infty]$. 
Take a sequence $a_1  <  a_2 < \cdots < a_i < \cdots$ 
such that $a_i \notin \spec_+(Y, \lambda)$ for any $i$ and $\lim_{i \to \infty} a_i = a$. 
Then let us define $\ph_{s,a}:= \varinjlim_i \ph_{s, a_i}$.  
This completes the definition of (\ref{eqn_scaling_isom}). 

\begin{rem}\label{rem_scaling_invariance} 
It is easy to check that the following holds for any $(Y, \lambda) \in \cont_{2n-1}$: 
\begin{itemize} 
\item $\ph_{s,\infty} = \Phi(\mu_{s\lambda, \lambda})$ for any $s \in (0,1]$. 
In particular, $\Phi(\mu_{s\lambda, \lambda}): \Phi(Y, \lambda) \to \Phi(Y, s\lambda)$ is an isomorphism. 
\item For any $s \in (0,1]$ and $a \le a'$, the diagram (\ref{diagram_scaling_isom}) commutes. 
\end{itemize} 
\end{rem}

\subsection{Contact action selectors defined from action selecting functors} 

Let $\Phi: \cont_{2n-1} \to \fvect^{\Z/2}_K$ be an action selecting functor. 

\begin{lem} 
For any $(Y,\lambda), (Y, \lambda') \in \cont_{2n-1}$
such that $\lambda \le \lambda'$, 
the map $\Phi(\mu_{\lambda \lambda'}): \Phi(Y, \lambda') \to \Phi(Y, \lambda)$ is an isomorphism of vector spaces. 
\end{lem}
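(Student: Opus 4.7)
The strategy is a sandwich argument using the rescaling isomorphisms of Remark \ref{rem_scaling_invariance}. Since $\lambda \le \lambda'$, I would write $\lambda' = e^h \lambda$ with $h \in C^\infty(Y, \R_{\ge 0})$ and set $s := e^{-\max h} \in (0, 1]$, so that
\[
s\lambda' \;\le\; \lambda \;\le\; \lambda' \;\le\; s^{-1}\lambda.
\]
In addition to $\mu_{\lambda \lambda'}$ itself, this makes the morphisms $\mu_{s\lambda', \lambda}$ and $\mu_{\lambda', s^{-1}\lambda}$ available, as well as the two rescaling morphisms $\mu_{s\lambda', \lambda'}$ and $\mu_{\lambda, s^{-1}\lambda}$, which are sent by $\Phi$ to isomorphisms.

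The key technical step is to verify the two identities of morphisms in $\cont_{2n-1}$
\[
\mu_{s\lambda', \lambda} \circ \mu_{\lambda, \lambda'} = \mu_{s\lambda', \lambda'}, \qquad \mu_{\lambda, \lambda'} \circ \mu_{\lambda', s^{-1}\lambda} = \mu_{\lambda, s^{-1}\lambda}.
\]
Writing $\lambda = e^{h'} s\lambda'$ with $h' := -\ln s - h \ge 0$, I would directly unwind the definition of composition: the glued cobordism $\mu_{s\lambda', \lambda} \circ \mu_{\lambda, \lambda'}$ is realized on $Y \times \R$ with form $e^r s\lambda'$, identity negative embedding, and positive embedding $(y, r) \mapsto (y, h(y) + h'(y) + r)$. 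Since $h + h' \equiv -\ln s$ is constant, this coincides on the nose with the tuple defining $\mu_{s\lambda', \lambda'}$, so the two morphisms agree even before passing to equivalence classes. The second identity is checked by an entirely analogous computation.

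Once these identities are established, applying $\Phi$ and using functoriality yields
\[
\Phi(\mu_{s\lambda', \lambda}) \circ \Phi(\mu_{\lambda, \lambda'}) = \Phi(\mu_{s\lambda', \lambda'}), \qquad \Phi(\mu_{\lambda, \lambda'}) \circ \Phi(\mu_{\lambda', s^{-1}\lambda}) = \Phi(\mu_{\lambda, s^{-1}\lambda}),
\]
and both right-hand sides are isomorphisms by Remark \ref{rem_scaling_invariance}. The first equation forces $\Phi(\mu_{\lambda, \lambda'})$ to be injective, the second forces it to be surjective, and hence it is an isomorphism of vector spaces.

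The main obstacle is the bookkeeping in the two cobordism identities: one must carefully track which end of each cobordism is positive or negative, which pointwise exponential factor multiplies which base form, and how the defining embeddings and Liouville one-forms interact under the identification $i_+(y, r) \sim i'_-(y, r)$ that defines the glued cobordism. Everything after that step is formal category theory together with the scaling statement already packaged into Remark \ref{rem_scaling_invariance}.
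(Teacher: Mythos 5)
Your proof is correct and is essentially the paper's argument: the paper also sandwiches $\lambda$ between $C^{-1}\lambda'$ and picks $C\ge e^{\max h}$ (your $s=C^{-1}$), uses the factorizations $\Phi(\mu_{C^{-1}\lambda',\lambda'})=\Phi(\mu_{C^{-1}\lambda',\lambda})\circ\Phi(\mu_{\lambda\lambda'})$ and $\Phi(\mu_{\lambda,C\lambda})=\Phi(\mu_{\lambda\lambda'})\circ\Phi(\mu_{\lambda',C\lambda})$, and invokes Remark \ref{rem_scaling_invariance} to get injectivity and surjectivity. Your explicit verification of the cobordism-level composition identities is detail the paper leaves implicit, but the route is the same.
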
 
\begin{proof} 
By $\lambda \le \lambda'$, there exists $h \in C^\infty(Y, \R_{\ge 0})$ such that
$\lambda' = e^h \lambda$. 
Take $C \in \R_{>0}$ so that $0 \le h(y) \le \log C$ for any $y \in Y$. 
Note that 
\[ 
\Phi(\mu_{C^{-1}\lambda', \lambda'}) = \Phi(\mu_{C^{-1}\lambda', \lambda}) \circ \Phi(\mu_{\lambda \lambda'}), \qquad
\Phi(\mu_{\lambda, C\lambda}) = \Phi(\mu_{\lambda \lambda'}) \circ \Phi(\mu_{\lambda', C\lambda}). 
\] 
By Remark \ref{rem_scaling_invariance}, 
$\Phi(\mu_{C^{-1}\lambda', \lambda'})$ and $\Phi(\mu_{\lambda, C\lambda})$ are isomorphisms. 
Hence $\Phi(\mu_{\lambda \lambda'})$ is an isomorphism. 
\end{proof} 
\begin{rem} 
$\Phi(\mu_{\lambda \lambda'})$ is not necessarily an isomorphism of filtered vector spaces. 
\end{rem} 

Let $(Y, \xi)$ be a closed contact manifold of dimension $2n-1$. 
Let us define a $\Z/2$-graded vector space 
\[ 
\Phi(Y,\xi):= \varinjlim_{\lambda \in \Lambda(Y,\xi)} \Phi(Y, \lambda) 
\] 
where the direct limit on the RHS is taken over
the directed set $\Lambda(Y, \xi)$ with the \textit{inverse} of $\le$ (see Remark \ref{rem_order}). 

For any $\lambda \in \Lambda(Y, \xi)$, 
let $I_\lambda: \Phi(Y,\lambda) \to \Phi(Y, \xi)$ denote the canonical isomorphism. 
For any $\sigma \in \Phi(Y,\xi)$, let 
\[ 
c^\Phi_\sigma(\lambda):= |I_\lambda^{-1}(\sigma)| = \inf\{ a \mid  \sigma \in I_\lambda(F^a \Phi(Y,\lambda)) \}. 
\] 
Note that $c^\Phi_0(\lambda)= |0| = -\infty$ for any $\lambda$. 

\begin{lem} 
If $\sigma \ne 0$, then 
$c^\Phi_\sigma: \Lambda(Y, \xi) \to \R_{\ge 0}$ is a contact action selector. 
\end{lem}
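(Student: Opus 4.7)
The plan is to verify the three defining properties (spectrality, conformality, monotonicity) of Definition \ref{defn_action_selector}. The preceding lemma shows that $\Phi(\mu_{\lambda\lambda'})$ is a $K$-linear isomorphism whenever $\lambda \le \lambda'$, so each canonical map $I_\lambda: \Phi(Y,\lambda) \to \Phi(Y,\xi)$ is a $K$-linear isomorphism, and in particular $I_\lambda^{-1}(\sigma) \ne 0$ for every $\lambda$ since $\sigma \ne 0$. The universal property of the direct limit yields the compatibility identity $I_\lambda \circ \Phi(\mu_{\lambda\lambda'}) = I_{\lambda'}$ for $\lambda \le \lambda'$, which I will use repeatedly.

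Spectrality and monotonicity would be almost immediate. Since $I_\lambda^{-1}(\sigma) \ne 0$, the value $c^\Phi_\sigma(\lambda) = |I_\lambda^{-1}(\sigma)|$ lies in $\spec \Phi(Y,\lambda) \subset \spec_+(Y,\lambda)$ by Lemma \ref{lem_AS_functor_spec}, and finiteness of $c^\Phi_\sigma(\lambda)$ follows from the exhaustion axiom $\bigcup_{b<\infty} F^b\Phi(Y,\lambda) = \Phi(Y,\lambda)$ of a filtration. For monotonicity, if $\lambda \le \lambda'$ the compatibility identity gives $I_\lambda^{-1}(\sigma) = \Phi(\mu_{\lambda\lambda'})(I_{\lambda'}^{-1}(\sigma))$, and since $\Phi(\mu_{\lambda\lambda'})$ is a morphism in $\fvect^{\Z/2}_K$ and hence preserves filtrations, this forces $c^\Phi_\sigma(\lambda) \le c^\Phi_\sigma(\lambda')$.

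For conformality, I would invoke the scaling machinery of Section 3.3. When $s \in (0,1]$, so that $s\lambda \le \lambda$, Remark \ref{rem_scaling_invariance} identifies $\Phi(\mu_{s\lambda,\lambda})$ with $\ph_{s,\infty}$, and together with the commutativity of (\ref{diagram_scaling_isom}) this shows that for each $a \in \R_{>0} \setminus \spec_+(Y,\lambda)$ the restriction $\Phi(\mu_{s\lambda,\lambda})|_{F^a\Phi(Y,\lambda)}$ coincides with the isomorphism $\ph_{s,a}: F^a\Phi(Y,\lambda) \to F^{sa}\Phi(Y,s\lambda)$. Approximating $|I_\lambda^{-1}(\sigma)|$ from above by such values of $a$ (possible since $\spec_+(Y,\lambda)$ is a null set by Lemma \ref{lem_spec_+}) and applying the same argument to the inverse map, I obtain $|\Phi(\mu_{s\lambda,\lambda})(x)| = s|x|$ for every $x \in \Phi(Y,\lambda)$. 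Combined with $I_{s\lambda}^{-1}(\sigma) = \Phi(\mu_{s\lambda,\lambda})(I_\lambda^{-1}(\sigma))$ this yields $c^\Phi_\sigma(s\lambda) = s\cdot c^\Phi_\sigma(\lambda)$ for $s \in (0,1]$; the case $s > 1$ then follows by applying this identity with $(\lambda,s)$ replaced by $(s\lambda, s^{-1})$.

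The hard part should be conformality, since it requires carefully unwinding the ad hoc construction of $\ph_{s,a}$ (via chains $1 = s_0 > \cdots > s_N = s$) together with the axiom that $\Phi$ collapses filtration intervals disjoint from $\spec_+$, and then passing from the level-wise statement to the statement about $|\cdot|$ by a null-set approximation argument. The other two properties are essentially formal consequences of the preceding lemma and Lemma \ref{lem_AS_functor_spec}.
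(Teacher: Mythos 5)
Your proposal is correct and takes essentially the same route as the paper: spectrality via Lemma \ref{lem_AS_functor_spec} together with the fact that $I_\lambda$ is an isomorphism, monotonicity from filtration-preservation of $\Phi(\mu_{\lambda\lambda'})$ and compatibility with the direct limit, and conformality (after reducing to $s \in (0,1]$) via the scaling isomorphisms $\ph_{s,a}$ and Remark \ref{rem_scaling_invariance}. The paper phrases the conformality step as the identity $I_\lambda(F^a\Phi(Y,\lambda)) = I_{s\lambda}(F^{sa}\Phi(Y,s\lambda))$ for all $a$, which is equivalent to your statement $|\Phi(\mu_{s\lambda,\lambda})(x)| = s|x|$.
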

\begin{proof} 
Let us take $\lambda \in \Lambda(Y, \xi)$ arbitrarily, 
and verify the conditions in Definition \ref{defn_action_selector}, i.e. 
(i) spectrality, (ii) conformality, and (iii) monotonicity. 

(i): since $I_\lambda$ is an isomorphism, $I_\lambda^{-1}(\sigma) \ne 0$. Hence
\[ 
c^\Phi_\sigma(\lambda) = | I_\lambda^{-1}(\sigma)|  \in \spec \Phi(Y,\lambda)  \subset  \spec_+(Y, \lambda). 
\] 

(ii): it is sufficient to show $c_\sigma(s\lambda) = s c_\sigma(\lambda)$ for any $s \in (0, 1]$. 
For any $a \in (0, \infty]$, consider the following diagram: 
\[ 
\xymatrix{ 
F^a \Phi(Y, \lambda) \ar[r]\ar[d]_-{\ph_{s,a}}& \Phi(Y, \lambda)  \ar[r]^-{I_\lambda} \ar[d]_-{\Phi(\mu_{s\lambda,\lambda})}& \Phi(Y, \xi) \\
F^{sa} \Phi(Y, s\lambda) \ar[r] & \Phi (Y, s\lambda). \ar[ru]_{I_{s\lambda}}
} 
\] 
The left square commutes by Remark \ref{rem_scaling_invariance}, 
and the right triangle commutes by the definition of $I_\lambda$ and $I_{s\lambda}$. 
Hence we obtain $I_\lambda(F^a\Phi(Y,\lambda)) = I_{s\lambda}(F^{sa}\Phi(Y,s\lambda))$ for any $a$, 
which implies that $c_\sigma(s\lambda) = s c_\sigma(\lambda)$. 

(iii): for any $\lambda' \in \Lambda(Y, \xi)$ which satisfies $\lambda' \ge \lambda$, 
the map $\Phi(\mu_{\lambda\lambda'}): \Phi(Y, \lambda') \to \Phi(Y, \lambda)$ 
preserves filtrations 
(because, by definition, every morphism of $\fvect^{\Z/2}_K$ preserves filtrations) 
and sends $I_{\lambda'}^{-1}(\sigma)$ to $I_\lambda^{-1}(\sigma)$. 
Thus $c^\Phi_\sigma(\lambda) = |I_\lambda^{-1}(\sigma)| \le |I_{\lambda'}^{-1}(\sigma)| = c^\Phi_\sigma(\lambda')$. 
\end{proof} 

\subsection{Category $\cont^0_{2n-1}$}

For any $n \in \Z_{\ge 1}$, we define a subcategory $\cont^0_{2n-1} \subset \cont_{2n-1}$ as follows. 

An object of $\cont^0_{2n-1}$ is $(Y,\lambda) \in \cont_{2n-1}$ such that $\lambda$ is nondegenerate, 
i.e. for any $\gamma \in P(Y, \lambda)$, $1$ is not an eigenvalue of the linearized return map of $\gamma$
on $(\xi_\lambda)_{\gamma(0)}$. 

For any $(Y_+, \lambda_+), (Y_-, \lambda_-) \in \cont^0_{2n-1}$, the set of morphisms 
$\Hom^0( (Y_+, \lambda_+), (Y_-, \lambda_-))$ 
consists of 
$[(X, \lambda, i_+, i_-)] \in \Hom ( (Y_+, \lambda_+), (Y_-, \lambda_-))$ 
which satisfies either (a) or (b) of the following:
\begin{itemize} 
\item[(a):] 
$i_+( Y_+ \times \R_{\ge 0}) \cap i_-(Y_- \times \R_{\le 0}) = \emptyset$. 
\item[(b):] 
$X = Y_- \times \R$, $\lambda = e^r \lambda_-$, $i_- = \id_{Y_-  \times \R}$ 
and $i_+  = \ph \times \id_\R$ where $\ph: Y_+ \to Y_-$ is a diffeomorphism 
which satisfies $\ph^* \lambda_- = \lambda_+$. 
\end{itemize} 

Then $\cont^0_{2n-1}$ inherits a symmetric monoidal structure from $\cont_{2n-1}$. 

\begin{rem} 
The identity morphisms
and the natural isomorphisms 
$(\alpha \sqcup \beta) \sqcup \gamma \cong \alpha \sqcup (\beta \sqcup \gamma)$
and $\alpha \sqcup \beta \cong \beta \sqcup \alpha$
are of type (b). 
Here $\alpha, \beta, \gamma$ are objects of $\cont^0_{2n-1}$, 
and $\sqcup$ denotes the monoidal product.
\end{rem} 

The next lemma is useful to define action selecting functors in practice. 

\begin{lem}\label{lem_as_functor_ext} 
Let $\Phi: \cont^0_{2n-1} \to \fvect^{\Z/2}_K$ be a 
symmetric strong monoidal functor such that
the following holds for any 
$(Y, \lambda) \in \cont^0_{2n-1}$: 
\begin{itemize} 
\item[(i):] For any $a,b \in \R_{>0}$ with $a<b$ and $[a,b] \cap \spec_+(Y,\lambda)=\emptyset$, $F^a \Phi(Y,\lambda) = F^b \Phi(Y,\lambda)$. 
\item[(ii):] For any $s \in (0,1]$, $\Phi(\mu_{s\lambda, \lambda}): \Phi(Y,\lambda) \to \Phi(Y, s\lambda)$ is an isomorphism of 
vector spaces. 
\item[(iii):]  For any $a \in \R_{>0}$,  $\dim_K F^a \Phi (Y, \lambda) < \infty$. 
\item[(iv):] For any $a, s \in \R_{>0}$, $\dim_K F^a \Phi(Y,\lambda) = \dim_K F^{sa} \Phi(Y, s\lambda)$. 
\end{itemize} 
Then $\Phi$ extends to an action selecting functor $\cont_{2n-1} \to \fvect^{\Z/2}_K$. 
Moreover, such extension is unique up to natural equivalences. 
\end{lem}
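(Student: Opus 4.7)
The plan is to extend $\Phi$ from $\cont^0_{2n-1}$ to $\cont_{2n-1}$ by approximating a degenerate contact form with nondegenerate ones. The key technical input is the following \emph{stability lemma}: if $\lambda$ and $\lambda' = e^h\lambda$ are nondegenerate on $Y$ with $h > 0$ everywhere, then for any $a \in \R_{>0} \setminus \spec_+(Y,\lambda)$, provided $\|h\|_{C^0}$ is small enough (depending on $a$), the map $\Phi(\mu_{\lambda\lambda'})$ restricts to an isomorphism $F^a\Phi(Y,\lambda') \xrightarrow{\sim} F^a\Phi(Y,\lambda)$. To prove it I would sandwich $\lambda'$ between conformal scalings $e^{-\epsilon}\lambda$ and $e^\epsilon\lambda$ with $\epsilon > \|h\|_{C^0}$ small; decomposing the scaling morphism $\mu_{e^{-\epsilon}\lambda, e^\epsilon\lambda}$ through $\lambda'$ and using (ii) yields that $\Phi(\mu_{\lambda\lambda'})$ is a vector space isomorphism, while (iv) together with (i) applied on the window $[ae^{-\epsilon},ae^\epsilon]$ (chosen to avoid $\spec_+$ of both $\lambda$ and $\lambda'$) gives $\dim F^a\Phi(Y,\lambda) = \dim F^a\Phi(Y,\lambda')$. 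Combined with filtration preservation, this dimensional equality promotes the vector space isomorphism to the filtered one.

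\textbf{Extension on objects and morphisms.} For $(Y,\lambda) \in \cont_{2n-1}$, I would choose a strictly decreasing sequence of nondegenerate $\lambda_n > \lambda$ converging to $\lambda$ in $C^\infty$ and set
\[
\Phi(Y,\lambda) := \varinjlim_n \Phi(Y,\lambda_n), \qquad F^a\Phi(Y,\lambda) := \varinjlim_n F^a\Phi(Y,\lambda_n).
\]
By the stability lemma each transition $\Phi(\mu_{\lambda_{n+1}\lambda_n})$ is eventually an $F^a$-isomorphism, so the colimit stabilizes and interleaving two sequences shows independence of the choice. For a morphism $\mu = [(X,\lambda_X,i_+,i_-)]$, I would perturb $\lambda_X$ inside the cylindrical ends $i_\pm(Y_\pm \times \R)$ so that the new ends $\tilde\lambda_\pm$ are nondegenerate and the morphism becomes type (a) in $\cont^0_{2n-1}$; then $\Phi(\mu)$ is declared to be $\Phi(\tilde\mu)$ pre- and postcomposed with the identifications $\Phi(Y_\pm,\lambda_\pm) \cong \Phi(Y_\pm,\tilde\lambda_\pm)$ coming from the object construction. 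Independence of perturbation again follows from the stability lemma applied to two choices of $\tilde\lambda_\pm$.

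\textbf{Functoriality, monoidality, and verification of Definition~\ref{defn_action_selecting_functor}.} Functoriality of the extension reduces, after a common nondegenerate perturbation of three consecutive contact form data, to functoriality on $\cont^0_{2n-1}$. Strong monoidality passes through the colimit since tensor product commutes with filtered colimits, which is visible from the definition of the tensor filtration in Section~3.2. For the two bullets of Definition~\ref{defn_action_selecting_functor}: if $[a,b] \cap \spec_+(Y,\lambda)=\emptyset$, then for large $n$ a fixed neighborhood of $[a,b]$ also avoids $\spec_+(Y,\lambda_n)$ by upper semicontinuity of the action spectrum on compact intervals, so condition (i) on $\cont^0_{2n-1}$ gives $F^a\Phi(Y,\lambda_n) = F^b\Phi(Y,\lambda_n)$, which transports to the colimit; the filtered isomorphism for the scaling $\mu_{(a/b)\lambda,\lambda}$ follows similarly from (ii) combined with the stability lemma. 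Uniqueness is then automatic: any extension $\Phi'$ satisfying the definition makes its own transition maps $F^a$-isomorphisms for large $n$ by the same argument, forcing $\Phi'(Y,\lambda) \cong \varinjlim_n \Phi'(Y,\lambda_n) = \varinjlim_n \Phi(Y,\lambda_n) = \Phi(Y,\lambda)$ naturally.

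\textbf{Main obstacle.} The stability lemma is the crux, since conditions (ii)--(iv) only directly compare $\Phi$ under conformal rescalings, while what one actually needs is filtered-dimensional control under nonconformal perturbations $\lambda \mapsto e^h\lambda$. The scaling-sandwich trick, combined with (i) and finite-dimensionality (iii), is what bridges this gap. A secondary technical point is that the morphism perturbation procedure must be compatible with the equivalence relation on generalized exact cobordisms and with composition; this requires choosing perturbations coherently inside cylindrical ends, but is routine once the stability lemma is available.
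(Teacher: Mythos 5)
Your overall architecture coincides with the paper's: extend on objects by a colimit over nondegenerate forms $e^h\lambda \ge \lambda$, extend on morphisms by pushing the cylindrical ends inward to nondegenerate boundary data, verify the second bullet of Definition~\ref{defn_action_selecting_functor} at the nondegenerate level by the injectivity-from-(ii) plus dimension-count-from-(i),(iii),(iv) argument, and prove uniqueness by comparing any extension with the colimit. The genuine gap is in your ``stability lemma'', which you yourself identify as the crux. First, its hypothesis is too weak: $\|h\|_{C^0}$ small gives no control whatsoever on the Reeb vector field of $e^h\lambda$ (which depends on $dh$), hence none on $\spec_+(Y,e^h\lambda)$; a $C^0$-small $h$ with large derivatives can create orbits whose periods, or sums of periods, hit any prescribed value near $a$. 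Consequently the window $[ae^{-\ep},ae^{\ep}]$ ``chosen to avoid $\spec_+$ of both $\lambda$ and $\lambda'$'' need not exist for a given small perturbation, and your dimension-equality step collapses. The correct statement (the paper's Lemma~\ref{lem_uniqueness_of_extension}) requires $\|h\|_{C^2}$ small, and even then the existence of the spectral window for the perturbed form is a dynamical fact (an Arzel\`a--Ascoli type compactness statement about Reeb orbits under $C^2$-small perturbation), not a formal consequence of (i)--(iv); you cannot get it from the scaling-sandwich trick alone. You do invoke ``upper semicontinuity of the action spectrum'' later, but only in the verification paragraph and without tying it to the regularity of the perturbation, whereas it is exactly what your stability lemma needs and exactly where the non-formal input sits.

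Second, you lean on the stability lemma more than necessary, which magnifies the damage of the gap. The paper's construction never uses such a statement: objects and morphisms are defined as colimits over the whole directed family of admissible perturbations $h$, respectively $(h_+,h_-)$, so no claim that transition maps eventually become filtered isomorphisms (or that the colimit ``stabilizes'') is needed, and independence of choices is automatic from directedness/cofinality; the stability-type statement enters only in the uniqueness part, where the $C^2$-smallness and the spectral continuity are made explicit. If you reorganize your argument this way, only your uniqueness step and the verification of the two bullets for degenerate $\lambda$ need the (repaired, $C^2$-small) stability input, and your sequential colimit (where $\lambda_n \to \lambda$ in $C^\infty$) is then fine. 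A minor further point: for morphisms, perturbing $\lambda_X$ inside the ends cannot by itself make condition (a) hold, since (a) is a condition on the embeddings $i_\pm$, not on the one-form (consider $\mu_{\lambda\lambda}$); one must shift the ends, i.e.\ replace $i_\pm$ by $i_{h_\pm}(y,r)=i_\pm(y,h_\pm(y)+r)$ as in the paper, which changes the source and target to $(Y_\pm, e^{h_\pm}\lambda_\pm)$ and is then compatible with the colimit defining the extension on objects.
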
 
\begin{proof} 
Suppose that $\Phi: \cont^0_{2n-1} \to \fvect^{\Z/2}_K$ satisfies the assumption. 
Then $\Phi$ extends to a symmetric strong monoidal functor 
$\bar{\Phi}: \cont_{2n-1} \to \fvect^{\Z/2}_K$
as follows. 
\begin{itemize}
\item 
For any $(Y, \lambda) \in \cont_{2n-1}$ and $a \in \R_{>0}$, 
let us define 
\[ 
\bar{\Phi}(Y,\lambda):= \varinjlim_h \Phi(Y, e^h\lambda), \qquad 
F^a \bar{\Phi}(Y, \lambda):= \varinjlim_h F^a \Phi (Y, e^h\lambda), 
\] 
where $h$ runs over elements of $C^\infty(Y, \R_{>0})$ such that 
$e^h\lambda$ is nondegenerate. 

\item 
For any $(Y_+, \lambda_+), (Y_-, \lambda_-) \in \cont_{2n-1}$ 
and $[(X, \lambda, i_+, i_-)] \in \Hom( (Y_+, \lambda_+), (Y_-, \lambda_-))$, 
let us define 
\[ 
\bar{\Phi}( [ (X, \lambda, i_+, i_-)])
:= \varinjlim_{(h_+, h_-)}  \Phi ( [ (X, \lambda, i_{h_+}, i_{h_-})]), 
\] 
where $(h_+, h_-)$ runs over elements of $C^\infty(Y_+ , \R_{>0}) \times C^\infty(Y_-, \R_{>0})$ 
such that the following holds: 
\begin{itemize} 
\item Both $e^{h_+}\lambda_+$ and $e^{h_-}\lambda_-$ are nondegenerate contact forms. 
\item Let us define 
\begin{align*} 
i_{h_+}&:  \del Y_+ \times \R \to X; \,   (y,r) \mapsto i_+(y, h_+(y) + r),  \\ 
i_{h_-}&:  \del Y_- \times \R \to X; \, (y,r) \mapsto i_-(y, h_-(y) + r). 
\end{align*} 
Then $i_{h_+}(Y_+  \times \R_{\ge 0}) \cap i_{h_-}(Y_- \times \R_{\le 0}) = \emptyset$. 
\end{itemize} 

\item
For any $(Y_1, \lambda_1), (Y_2,\lambda_2) \in \cont_{2n-1}$, 
an isomorphism 
\[ 
\bar{\Phi}(Y_1, \lambda_1) \otimes \bar{\Phi}(Y_2, \lambda_2) \to \bar{\Phi} ( Y_1 \sqcup Y_2, \lambda_1 \sqcup \lambda_2)
\] 
is defined as 
\[ 
\varinjlim_{(h_1, h_2)}   \Big(  \Phi( Y_1, e^{h_1} \lambda_1) \otimes \Phi(Y_2, e^{h_2} \lambda_2) 
\to \Phi (Y_1 \sqcup Y_2, e^{h_1} \lambda_1 \sqcup e^{h_2} \lambda_2) \Big), 
\] 
where the limit is taken over the set of pairs 
 $(h_1, h_2) \in C^\infty (Y_1, \R_{> 0}) \times C^\infty(Y_2, \R_{> 0})$ 
such that both 
$e^{h_1} \lambda_1$ and $e^{h_2} \lambda_2$ are nondegenerate contact forms. 
\end{itemize} 

To show that $\bar{\Phi}$ is an action selecting functor, 
it is sufficient to check that the following holds for any 
$a,b \in \R_{>0}$ such that $a<b$ and $[a,b] \cap \spec_+(Y,\lambda)=\emptyset$: 
\begin{itemize} 
\item $F^a\bar{\Phi}(Y,\lambda) = F^b \bar{\Phi}(Y,\lambda)$. 
\item Let $c:=a/b$. Then the linear map 
$\bar{\Phi} ( \mu_{c\lambda, \lambda})|_{F^a\bar{\Phi}(Y,\lambda)}: F^a\bar{\Phi}(Y,\lambda) \to F^a \bar{\Phi} (Y, c\lambda)$ is an isomorphism. 
\end{itemize} 
We may assume that $(Y, \lambda) \in \cont^0_{2n-1}$, since 
the general case follows from this case by taking limits. 
Then, the first condition is the same as assumption (i). 
The map in the second condition is injective, since 
$\Phi(\mu_{c\lambda, \lambda}): \Phi(Y, \lambda) \to \Phi(Y, c\lambda)$ is an isomorphism 
by assumption (ii). 
This map is also surjective, since 
\[ 
\dim_K F^a\Phi(Y,\lambda) = \dim_K F^b \Phi(Y,\lambda) = \dim_K F^a \Phi(Y, c\lambda) < \infty 
\] 
by assumptions (i), (iii) and (iv). 

The uniqueness follows from Lemma \ref{lem_uniqueness_of_extension} below. 
\end{proof} 

\begin{lem}\label{lem_uniqueness_of_extension} 
Let $\Phi: \cont_{2n-1} \to \fvect^{\Z/2}_K$ be an action selecting functor. 
For any $(Y, \lambda) \in \cont_{2n-1}$ and $a \in (-\infty, \infty]$, 
\[ 
\varinjlim_h F^a \Phi(Y, e^h\lambda) \to F^a \Phi(Y, \lambda) 
\] 
is an isomorphism, where $h$ runs over elements of $C^\infty(Y, \R_{>0})$ such that $e^h \lambda$ is nondegenerate. 
\end{lem}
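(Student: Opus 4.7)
The plan is to show both injectivity and surjectivity of the canonical map, using two ingredients: each $\Phi(\mu_{\lambda, e^h\lambda})\colon \Phi(Y, e^h\lambda) \to \Phi(Y, \lambda)$ is a filtration-preserving isomorphism of ungraded vector spaces (the lemma just before Remark~\ref{rem_scaling_invariance}), and the scaling isomorphisms $\ph_{s,a}$ of Section 3.3 precisely control how filtrations behave under rescaling.

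For injectivity, I note that every transition map in the directed system and every projection down to $\Phi(Y, \lambda)$ is an isomorphism of underlying vector spaces, so the induced map $\varinjlim_h \Phi(Y, e^h\lambda) \to \Phi(Y, \lambda)$ is an isomorphism. Filtered colimits are exact in vector spaces, so $\varinjlim_h F^a \Phi(Y, e^h\lambda) \hookrightarrow \varinjlim_h \Phi(Y, e^h\lambda)$ is injective; composing with the above isomorphism yields injectivity of the map in question.

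Surjectivity is the substantive part. For a constant $c > 0$, setting $t := e^c$, Remark~\ref{rem_scaling_invariance} identifies $\Phi(\mu_{\lambda, t\lambda})$ with the scaling isomorphism $\ph_{1/t, \infty}$ (relative to base $t\lambda$), and the commutative square (\ref{diagram_scaling_isom}) says its restriction to $F^a \Phi(Y, t\lambda)$ is precisely $\ph_{1/t, a}$, an isomorphism onto $F^{a/t} \Phi(Y, \lambda)$. For a general $h \in C^\infty(Y, \R_{>0})$ with $c_+ := \max h$, we have $e^h \lambda \le e^{c_+} \lambda$, so by functoriality the composition $\Phi(Y, e^{c_+}\lambda) \to \Phi(Y, e^h\lambda) \to \Phi(Y, \lambda)$ coincides with $\Phi(\mu_{\lambda, e^{c_+}\lambda})$. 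Combined with filtration preservation of each factor, this sandwich yields
\[
F^{a e^{-c_+}} \Phi(Y, \lambda) \subset \operatorname{Im}\bigl(F^a \Phi(Y, e^h \lambda) \to F^a \Phi(Y, \lambda)\bigr) \subset F^a \Phi(Y, \lambda).
\]
Given any $\sigma \in F^a \Phi(Y, \lambda)$, the third filtration axiom supplies $b < a$ with $\sigma \in F^b \Phi(Y, \lambda)$; then any nondegenerate $h > 0$ with $\max h < \ln(a/b)$ places $\sigma$ in the image.

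The main technical point I expect to handle is the cofinality of nondegenerate $h$ with arbitrarily small $\max h$ inside the indexing directed set: I use that Reeb nondegeneracy is a $C^\infty$-generic condition, so a small positive constant on $Y$ admits a $C^\infty$-small perturbation, still positive and still bounded above by any prescribed $c_+$, whose exponential times $\lambda$ is nondegenerate. The case $a = \infty$ is subsumed in the argument by interpreting $F^{a e^{-c_+}}$ as $\Phi(Y, \lambda)$ itself.
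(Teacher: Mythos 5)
Your argument is correct, and it is not the proof the paper gives, although both rest on the same ``constant sandwich'' $\lambda \le e^h\lambda \le e^{\max h}\lambda$. The paper first reduces to $a \notin \spec_+(Y,\lambda)$ (removing this by a limit at the end), and then shows that for every nondegenerate $h>0$ with $\| h\|_{C^2}$ small the \emph{individual} map $F^a\Phi(Y,e^h\lambda) \to F^a\Phi(Y,\lambda)$ is an isomorphism: it chooses $\ep$ so that $[a, e^{\max h}a]$ is disjoint from both $\spec_+(Y,e^h\lambda)$ and $\spec_+(Y,e^{\max h}\lambda)$, and applies the defining axiom of an action selecting functor to the two constant rescalings to get injectivity and surjectivity of the middle map. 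This requires a continuity-of-the-period-spectrum statement under $C^2$-small perturbations, which the paper asserts rather than proves. You instead prove injectivity and surjectivity of the colimit map directly: injectivity from the fact that all structure maps are vector-space isomorphisms (the lemma at the start of Section 3.4 --- note it comes just \emph{after} Remark \ref{rem_scaling_invariance}, not before, and it gives an isomorphism of underlying vector spaces only, which is all you use) together with exactness of directed colimits; surjectivity from the identification, via $\ph_{s,a}$ and Remark \ref{rem_scaling_invariance}, of the image of $F^a\Phi(Y, e^{c_+}\lambda)$ in $\Phi(Y,\lambda)$ with $F^{ae^{-c_+}}\Phi(Y,\lambda)$, combined with the filtration axiom $\bigcup_{b<a}F^b V = F^a V$. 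This buys you two simplifications: no reduction to $a\notin\spec_+(Y,\lambda)$ (the scaling isomorphisms are already defined for all $a\in(0,\infty]$) and, more substantially, no control whatsoever on $\spec_+(Y,e^h\lambda)$ for the perturbed forms, so the unproved spectral-continuity input disappears; the price is that you obtain only the colimit statement, not the paper's stronger conclusion that each map $F^a\Phi(Y,e^h\lambda)\to F^a\Phi(Y,\lambda)$ is an isomorphism for $C^2$-small $h$ (which is not needed for the lemma). Both arguments use, as you note, genericity of nondegeneracy to ensure the index family is nonempty, directed, and contains $h$ with arbitrarily small $\max h$; and the trivial case $a\le 0$, where $F^a=\{0\}$, should be dispatched separately since $\ln(a/b)$ is meaningless there.
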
 
\begin{proof} 
If $a \le 0$ then the assertion is obvious. 
In the case $a >0$, we may assume that $a \notin \spec_+(Y, \lambda) \cup \{\infty\}$ 
because this assumption can be dropped by taking limits. 
It is sufficient to show that $F^a \Phi(Y, e^h \lambda) \to F^a \Phi (Y, \lambda)$ is an isomorphism 
for any $h \in C^\infty(Y, \R_{>0})$ such that $\| h \|_{C^2}$ is sufficiently small. 

Since $a \notin \spec_+(Y,\lambda) \cup \{ \infty\}$, there exists $\ep>0$ (depending on $a$ and $\lambda$) such that, 
if $h \in C^\infty(Y, \R_{>0})$ satisfies $\| h \|_{C^2} < \ep$ then 
\[ 
[a, e^{\max h} a] \cap \spec_+(Y, e^{\max h} \lambda) = [ a, e^{\max h} a] \cap \spec_+(Y, e^h \lambda) = \emptyset. 
\] 
Since $\Phi$ is an action selecting functor, 
\[ 
F^a \Phi (Y, e^{\max h} \lambda) \to F^a \Phi (Y, \lambda), \qquad 
F^a\Phi (Y, e^h \lambda) \to F^a \Phi (Y, e^{h - \max h} \lambda) 
\] 
are isomorphisms. Then $F^a \Phi (Y, e^h \lambda) \to F^a \Phi (Y, \lambda)$ is also an isomorphism. 
\end{proof}

\subsection{Examples of action selecting functors} 

We explain two examples of action selecting functors $\Phi^\ech$ and $\Phi^{\CH}$, 
which are defined respectively by embedded contact homology (ECH) and (full) contact homology. 

{\bf Embedded contact homology (ECH).} 
Our first example is 
$\Phi^{\ech}: \cont_3 \to \fvect_{\Z/2}$
which is defined by ECH. 
By Lemma \ref{lem_as_functor_ext}, 
it is sufficient to define 
$\Phi^{\ech}: \cont^0_3 \to \fvect_{\Z/2}$ 
which satisfies 
the conditions (i)--(iv). 
For any $(Y,\lambda) \in \cont^0_3$ and $a \in \R_{>0}$, we set 
\begin{align*} 
\Phi^{\ech}(Y,\lambda)&:= \ech_*(Y, \lambda: \Z/2), \\ 
F^a \Phi^{\ech}(Y,\lambda)&:= \image(\ech^a_*(Y, \lambda: \Z/2) \to \ech_*(Y, \lambda: \Z/2) ). 
\end{align*} 
The invariants on the RHS are defined in \cite{Hutchings_Taubes_Chord_II} Sections 1.1--1.3. 

For any $(Y_+, \lambda_+), (Y_-, \lambda_-) \in \cont^0_3$ and 
$\mu \in \Hom^0( (Y_+, \lambda_+), (Y_-, \lambda_-))$, 
let us define $\Phi^\ech(\mu)$. 
Take a tuple $(X, \lambda, i_+, i_-)$ which represents $\mu$. 

In case (a), i.e. $i_+(Y_+ \times \R_{\ge 0}) \cap i_-(Y_- \times \R_{\le 0}) = \emptyset$, let 
\begin{equation}\label{eqn_X_0} 
X_0:= X \setminus ( i_+ ( Y_+ \times \R_{>0}) \cup i_-( Y_- \times \R_{<0})).
\end{equation} 
Then $(X_0, \lambda|_{X_0})$ is an exact symplectic cobordism from 
$(Y_+, \lambda_+)$ to $(Y_-, \lambda_-)$. 
By \cite{Hutchings_Taubes_Chord_II} Theorem 1.9, 
one can define a linear map 
$\Phi^\ech(Y_+, \lambda_+) \to \Phi^\ech(Y_-, \lambda_-)$ 
which preserves filtrations. 
This map depends only on $\mu$, 
since it depends only on (the homotopy class of) the cobordism $(X_0, d \lambda|_{X_0})$. 
Then we define $\Phi^{\ech}(\mu)$ to be this map. 

\begin{rem} 
Although \cite{Hutchings_Taubes_Chord_II} Theorem 1.9 assumes that both $Y_+$ and $Y_-$ are connected, 
this assumption can be dropped if one does not require the direct limit axiom. 
See \cite{Hutchings_Taubes_Chord_II} Remark 1.12.
\end{rem} 

In case (b), i.e. $X = Y_- \times \R$, $\lambda = e^r \lambda_-$, $i_-  = \id_{Y_- \times \R}$
and $i_+ = \ph \times \id_\R$ 
where $\ph: Y_+  \to Y_-$ is a diffeomorphism which satisfies $\ph^*\lambda_-  =\lambda_+$, 
one can define a bijection 
\[ 
P(Y_+, \lambda_+) \to P(Y_-, \lambda_-); \quad \gamma \mapsto \ph \circ \gamma. 
\] 
This bijection induces an isomorphism 
$\Phi^\ech(Y_+, \lambda_+) \to \Phi^\ech(Y_-, \lambda_-)$
which preserves filtrations and depends only on $\mu$. 
We define $\Phi^{\ech}(\mu)$ to be this map. 

For any $(Y_1, \lambda_1), (Y_2, \lambda_2) \in \cont^0_3$, there exists an isomorphism 
\begin{equation}\label{eqn_product_ech} 
\ech_*(Y_1 \sqcup Y_2, \lambda_1 \sqcup \lambda_2) \cong \ech_*(Y_1, \lambda_1) \otimes \ech_*(Y_2, \lambda_2), 
\end{equation} 
which follows from a canonical isomorphism between underlying chain complexes 
(see \cite{Hutchings_quantitative} Section 2.2). 
The map (\ref{eqn_product_ech}) 
preserves action filtrations as proved in Step 2 in the proof of \cite{Hutchings_quantitative} Proposition 4.6. 

Let us explain how conditions (i)--(iv) are verified. 
For any $a \in \R_{>0}$, 
$\ech^a_*(Y, \lambda)$ is the homology of 
$(\ecc^a_*(Y, \lambda), \del_J)$, where $J$ is a generic symplectization-admissible almost complex structure 
on $Y \times \R$ (see \cite{Hutchings_Taubes_Chord_II} page 2602). 
From the definition of this chain complex, it is easy to see that 
$\dim \ecc^a_*(Y, \lambda)<\infty$ (which implies (iii)) and 
$\ecc^a_*(Y, \lambda) = \ecc^b_*(Y, \lambda)$ if $[a,b] \cap \spec_+(Y,\lambda) = \emptyset$ (which implies (i)). 
(ii) follows from \cite{CGHR} formula (53). 
(iv) holds since $\ech^a_*(Y,\lambda) \cong \ech^{sa}_*(Y, s\lambda)$
for any $a,s \in\R_{>0}$; see the following remark. 

\begin{rem}\label{rem_scaling_isom} 
For the proof of the above isomorphism, see \cite{Hutchings_Taubes_Chord_II} Section 1.2. 
The point is that, if an almost complex structure $J$ on $Y \times \R$ is symplectization-admissible with respect to $\lambda$, 
then one can define an almost complex structure $J^s$ on $Y \times \R$ which is symplectization-admissible with respect to $s\lambda$, and a self-diffeomorphism $f: Y \times \R \to Y \times \R; \, (y,r) \mapsto (y, sr)$ satisfies $f^* (J^s) = J$. 
\end{rem}

{\bf Contact homology.} 
Let $n \in \Z_{\ge 1}$. 
Our next example is 
$\Phi^{\CH}: \cont_{2n-1} \to \fvect^{\Z/2}_\Q$, 
which is defined by (full) contact homology. 
For constructions of contact homology, see \cite{Bao_Honda}, \cite{Ishikawa}, \cite{Pardon} and references therein. 
The following argument follows \cite{Pardon}. 

By Lemma \ref{lem_as_functor_ext}, 
it is sufficient to define 
$\Phi^{\CH}: \cont^0_{2n-1} \to \fvect^{\Z/2}_\Q$ 
which satisfies 
the conditions (i)--(iv).
For any $(Y,\lambda) \in \cont^0_{2n-1}$, we set 
\begin{align*} 
\Phi^{\CH}(Y,\lambda)&:= \varinjlim_{a \to \infty} \CH_*(Y, \lambda)^{<a},  \\ 
F^a \Phi^{\CH}(Y,\lambda)&:= \image ( \CH_*(Y, \lambda)^{<a} \to \Phi^{\CH}(Y, \lambda)). 
\end{align*} 
The chain complex underlying $\CH_*(Y, \lambda)^{<a}$ is (as a vector space) spanned by 
formal products of (unparameterized) good periodic Reeb orbits of $\lambda$ with total action less than $a$. 
See the third bullet (Action filtration) of \cite{Pardon} Section 1.8. 

For any $(Y_+, \lambda_+), (Y_-, \lambda_-) \in \cont^0_{2n-1}$ and 
$\mu \in \Hom^0( (Y_+, \lambda_+), (Y_-, \lambda_-))$, 
let us define $\Phi^{\CH}(\mu)$. 
Take a tuple $(X, \lambda, i_+, i_-)$ which represents $\mu$. 

In case (a), i.e. $i_+(Y_+ \times \R_{\ge 0}) \cap i_-(Y_- \times \R_{\le 0}) = \emptyset$, let 
us define $X_0$ by (\ref{eqn_X_0}), 
then $(X_0, \lambda|_{X_0})$ is an exact symplectic cobordism from 
$(Y_+, \lambda_+)$ to $(Y_-, \lambda_-)$. 
Then one can define a linear map 
$\Phi^{\CH}(Y_+, \lambda_+) \to \Phi^{\CH}(Y_-, \lambda_-)$ 
which preserves filtrations and depends only on $\mu$ (see \cite{Pardon} Section 1.3). 
We define $\Phi^{\CH}(\mu)$ to be this map. 

In case (b), i.e. $X = Y_- \times \R$, $\lambda = e^r \lambda_-$, $i_-  = \id_{Y_- \times \R}$
and $i_+ = \ph \times \id_\R$ 
where $\ph: Y_+  \to Y_-$ is a diffeomorphism which satisfies $\ph^*\lambda_-  =\lambda_+$, 
one can define a bijection 
\[ 
P(Y_+, \lambda_+) \to P(Y_-, \lambda_-); \quad \gamma \mapsto \ph \circ \gamma. 
\] 
This bijection induces an isomorphism 
$\Phi^{\CH}(Y_+, \lambda_+) \to \Phi^{\CH}(Y_-, \lambda_-)$
which preserves filtrations and depends only on $\mu$. 
We define $\Phi^{\CH}(\mu)$ to be this map. 

For any $(Y_1, \lambda_1), (Y_2, \lambda_2) \in \cont^0_{2n-1}$, 
there exists a canonical isomorphism 
\begin{equation}\label{eqn_prod_ch} 
\Phi^{\CH}(Y_1 \sqcup Y_2, \lambda_1 \sqcup \lambda_2) \cong  \Phi^{\CH}(Y_1, \lambda_1) \otimes \Phi^{\CH}(Y_2, \lambda_2) 
\end{equation}
which follows from a canonical isomorphism between underlying chain complexes (see \cite{Pardon} Proposition 4.36). 
This chain level isomorphism preserves filtrations, 
and the underlying chain complexes satisfy the assumption in Lemma \ref{lem_filtered_chain_complex}
(see the paragraph below), 
thus the map (\ref{eqn_prod_ch}) preserves filtrations. 

Let us explain how conditions (i)--(iv) are verified. 
For any $a \in \R_{>0}$, 
$\CH_*(Y, \lambda)^{<a}$ is the homology of a chain complex  
$\mathrm{CC}_*(Y, \xi_\lambda)^{<a}_{\lambda, J, \theta}$ (which we abbreviate by $\mathrm{CC}^a$), 
where $J$ is an almost complex structure on $\xi_\lambda$ which is compatible with $d\lambda$, 
and $\theta$ is a specification of ``perturbation data'' for relevant moduli spaces (see \cite{Pardon} Section 1.2). 
From the definition of this chain complex, 
it is easy to see that 
$\dim \mathrm{CC}^a < \infty$ for any $a<\infty$ (which implies (iii)), 
and 
$\mathrm{CC}^a = \mathrm{CC}^b$ 
if $[a,b] \cap \spec_+(Y, \lambda) = \emptyset$ (which implies (i)). 
(ii) follows from Lemma 1.2 in \cite{Pardon}. 
(iv) holds since
$\CH_*(Y,\lambda)^{<a} \cong \CH_*(Y, s\lambda)^{<sa}$
for any $a,s \in \R_{>0}$, by the reasoning similar to Remark \ref{rem_scaling_isom}. 

\section{A sufficient criterion for strong closing property} 

After some preparations in Section 4.1, 
we state and prove a sufficient criterion for strong closing property (Theorem \ref{mainthm}) in Section 4.2. 
In Section 4.3, we prove Theorem \ref{thm_SCP_dim3} as an application of Theorem \ref{mainthm}, combined with results in \cite{CGHR}. 

\subsection{Standard contact sphere} 

Let us consider $\R^{2n}$ with coordinates $q_1, \ldots, q_n, p_1, \ldots, p_n$. 
We will use the following notations: 
\begin{align*} 
\lambda_{2n}&:= \sum_{i=1}^n \frac{p_i dq_i - q_i dp_i}{2} \in \Omega^1(\R^{2n}),  \\ 
B^{2n}(r)&:= \bigg\{(q_1, \ldots, q_n, p_1, \ldots, p_n) \in \R^{2n} \biggm{|}  \sum_{i=1}^n q_i^2 + p_i^2 \le r^2  \bigg\},  \\ 
S^{2n-1}&:= \del B^{2n}(1), \qquad \xi_{2n-1}:= \xi_{\lambda_{2n}|_{S^{2n-1}}}. 
\end{align*} 

For any action selecting functor $\Phi: \cont_{2n-1} \to \fvect^{\Z/2}_K$, let 
\[ 
A_\Phi:= \Phi(S^{2n-1}, \xi_{2n-1})^\vee =  \Hom_K  ( \Phi(S^{2n-1}, \xi_{2n-1}) , K). 
\] 
Note that $A_\Phi$ has a natural $\Z/2$-grading. 

The goal of this subsection is to define an algebra structure on $A_\Phi$, 
and an $A_\Phi$-module structure on $\Phi(Y,\xi)$ 
for any $2n-1$-dimensional closed and connected contact manifold $(Y, \xi)$. 
Let us first prove a few elementary lemmas. 

\begin{lem}\label{lem_one_form} 
Let $(Y, \lambda) \in \cont_{2n-1}$ 
and $h \in C^\infty(Y, \R_{\ge 0}) \setminus \{0\}$. 
Let 
\[ 
U_h:= \{ (y,r) \in Y \times \R \mid 0 < r < h(y) \}. 
\] 
Let $\ep \in \R_{>0}$ and $\ph: B^{2n}(1) \to U_h$ be an embedding such that 
$\ph^* d(e^r \lambda) = d(\ep \lambda_{2n})$. 
Then there exists $\eta \in \Omega^1(Y \times \R )$ such that 
$\supp (\eta - e^r \lambda) \subset U_h$, 
$d \eta = d (e^r\lambda)$, and 
$\ph^* \eta = \ep \lambda_{2n}$. 
\end{lem}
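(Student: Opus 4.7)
The plan is to write $\eta = e^r\lambda - d\sigma$ for a suitable smooth function $\sigma$ on $Y \times \R$; this guarantees $d\eta = d(e^r\lambda)$ automatically, and the two remaining conditions translate into $\supp\sigma \subset U_h$ together with $d(\sigma\circ\ph) = \ph^*(e^r\lambda) - \ep\lambda_{2n}$ on $B^{2n}(1)$. The 1-form $\alpha := \ph^*(e^r\lambda) - \ep\lambda_{2n}$ is closed on the contractible ball $B^{2n}(1)$ by hypothesis, so the Poincar\'e lemma provides $f \in C^\infty(B^{2n}(1))$ with $df = \alpha$; it therefore suffices to produce $\sigma$ satisfying $\sigma \circ \ph = f$ on $B^{2n}(1)$ and $\supp\sigma \subset U_h$.

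To do so, I would first enlarge the domain of $\ph$. Since $\ph$ is a smooth embedding of the closed ball, it restricts from a smooth map defined on an open neighborhood of $B^{2n}(1)$, and using compactness of $B^{2n}(1)$ together with the openness of the injective-immersion condition, one obtains a smooth embedding $\tilde\ph: B^{2n}(1+\delta) \to Y \times \R$ for some $\delta > 0$; since $\ph(B^{2n}(1))$ is a compact subset of the open set $U_h$, shrinking $\delta$ if necessary ensures $\tilde\ph(B^{2n}(1+\delta)) \subset U_h$. Extend $f$ arbitrarily to a smooth function $\tilde f$ on $B^{2n}(1+\delta)$ (e.g.\ by Seeley extension; no compatibility beyond $\tilde f|_{B^{2n}(1)} = f$ is needed). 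Choose a cutoff $\chi \in C^\infty(\R^{2n})$ with $\chi \equiv 1$ on $B^{2n}(1)$ and $\supp\chi \subset B^{2n}(1+\delta/2)$, and define $\sigma$ on $Y \times \R$ to be $(\chi\tilde f) \circ \tilde\ph^{-1}$ on $\tilde\ph(B^{2n}(1+\delta))$ and zero elsewhere. Since $\chi\tilde f$ has compact support inside $B^{2n}(1+\delta)$, the function $\sigma$ is smooth on $Y \times \R$, and $\supp\sigma \subset \tilde\ph(\supp\chi) \subset U_h$.

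The verification is then routine: $d\eta = d(e^r\lambda)$ holds by construction; $\supp(\eta - e^r\lambda) = \supp d\sigma \subset \supp\sigma \subset U_h$; and on $B^{2n}(1)$, because $\chi \equiv 1$ there, one has $\sigma\circ\ph = (\chi\tilde f)|_{B^{2n}(1)} = f$, giving $\ph^*\eta = \ph^*(e^r\lambda) - df = \ep\lambda_{2n}$. The only delicate point, and thus the main obstacle, is the extension step: one must enlarge the domain of $\ph$ while preserving both the embedding property and containment in the open set $U_h$, and the cutoff $\chi$ must leave enough room so that $\chi\tilde f$ is compactly supported away from the boundary of the enlarged domain. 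Beyond this geometric bookkeeping, the argument uses only the original pullback condition on $B^{2n}(1)$ itself.
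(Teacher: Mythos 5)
Your proof is correct, and it takes a genuinely different route from the paper's. The paper first produces some $\eta' \in \Omega^1(Y\times\R)$ with the right support and pullback, namely $\supp(\eta'-e^r\lambda)\subset U_h$ and $\ph^*\eta'=\ep\lambda_{2n}$, with no control on $d\eta'$; it then corrects the error: since $d(\eta'-e^r\lambda)$ is supported in $U_h\setminus\image\ph$ and is exact with a compactly supported primitive in $U_h$, injectivity of $H^2_{c,\dR}(U_h\setminus\image\ph)\to H^2_{c,\dR}(U_h)$ yields a form $\eta''$ (a $1$-form, despite the $\Omega^2$ misprint) supported in $U_h\setminus\image\ph$ with $d\eta''=d(\eta'-e^r\lambda)$, and $\eta:=\eta'-\eta''$ works. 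You instead note that the discrepancy $\alpha=\ph^*(e^r\lambda)-\ep\lambda_{2n}$ is closed, hence exact on the contractible ball, and correct $e^r\lambda$ by the exact form $d\sigma$, where $\sigma$ is a cut-off extension of a primitive $f$; then $d\eta=d(e^r\lambda)$ holds by construction and no compactly supported cohomology is needed. Your delicate step is fine: the extension of $\ph$ to an embedding $\tilde\ph$ of $B^{2n}(1+\delta)$ with image in $U_h$ follows from the compactness argument you indicate (immersivity is open, and injectivity near the compact ball follows by the usual sequence argument), and since the dimensions agree, $\tilde\ph$ is a local diffeomorphism, which is exactly what makes $(\chi\tilde f)\circ\tilde\ph^{-1}$ smooth and compactly supported in $U_h$. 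Comparing the two: your argument is more elementary (Poincar\'e lemma plus standard extension of a codimension-zero embedding) and yields the slightly stronger conclusion that $\eta-e^r\lambda$ is exact; the paper's argument does not use simple connectivity of the embedded domain, so it would survive replacing $B^{2n}(1)$ by a domain on which $\alpha$ is closed but not exact, at the cost of the topological input about $U_h\setminus\image\ph$.
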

\begin{proof} 
There exists $\eta' \in \Omega^1(Y \times \R)$ such that 
$\supp (\eta' - e^r\lambda) \subset U_h$ 
and $\ph^* \eta' = \ep \lambda_{2n}$. 
Then $\supp d(\eta' - e^r\lambda) \subset U_h  \setminus \image \ph$. 
Since 
$H^2_{c, \dR}( U_h  \setminus \image \ph) \to H^2_{c, \dR}(U_h) $ is injective, 
there exists 
$\eta'' \in \Omega^2(Y \times \R)$ such that 
$\supp \eta'' \subset U_h  \setminus \image \ph$ and 
$d\eta'' = d ( \eta' - e^r\lambda)$. 
Then $\eta:= \eta' - \eta''$ satisfies the required conditions. 
\end{proof} 

\begin{lem}\label{lem_balls_in_symplectic_manifolds} 
Let $(X, \omega)$ be a $2n$-dimensional symplectic manifold, 
$k \in \Z_{\ge 1}$, $\ep \in \R_{>0}$, and 
$\ph_1, \ldots, \ph_k, \ph'_1, \ldots, \ph'_k$ be symplectic embeddings from 
$(B^{2n}(1),  d (\ep\lambda_{2n}))$ to $(X, \omega)$ such that 
\[ 
1 \le i < j \le k  
\implies \ph_i(0, \ldots, 0) \ne \ph_j(0, \ldots, 0), \quad \ph'_i(0,\ldots,0) \ne \ph'_j(0,\ldots,0). 
\] 
If $X$ is connected, 
then there exist a compactly-supported diffeomorphism $\psi: X \to X$ and $r \in (0, 1)$ such that 
$\psi^* \omega = \omega$ and 
$\psi \circ \ph_i |_{B^{2n}(r)} = \ph'_i |_{B^{2n}(r)}$ for any $i$. 
\end{lem}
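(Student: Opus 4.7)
The plan is to construct $\psi$ as a composition of three compactly-supported symplectomorphisms, each matching progressively finer data: first the centers of the balls, then the differentials at the centers, and finally a neighborhood of each center by Moser's trick.

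First, since $X$ is connected, the group $\mathrm{Symp}_c(X,\omega)$ of compactly-supported symplectomorphisms of $(X,\omega)$ acts $k$-transitively on ordered configurations of $k$ distinct points of $X$. This is a standard fact, whose proof realizes any smooth isotopy of a finite subset of $X$ as the flow of a compactly-supported time-dependent Hamiltonian, built from a partition of unity supported in local Darboux charts around the moving points. Applying this to $(\ph_1(0), \ldots, \ph_k(0))$ and $(\ph'_1(0), \ldots, \ph'_k(0))$ produces $\psi_1 \in \mathrm{Symp}_c(X,\omega)$ with $\psi_1(\ph_i(0)) = \ph'_i(0) =: x_i$ for each $i$. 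Replacing each $\ph_i$ by $\psi_1 \circ \ph_i$, I may assume $\ph_i(0) = \ph'_i(0) = x_i$ for all $i$.

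Next, I would choose pairwise disjoint Darboux neighborhoods $W_i$ of $x_i$, each symplectomorphic to an open subset of $(\R^{2n}, d\lambda_{2n})$. By continuity of the embeddings at $0$, some $\rho \in (0,1)$ makes $\ph_i(B^{2n}(\rho)) \cup \ph'_i(B^{2n}(\rho)) \subset W_i$ for every $i$. Inside each $W_i$, the differentials $d\ph_i(0)$ and $d\ph'_i(0)$ are linear symplectic isomorphisms $\R^{2n} \to T_{x_i}X$; since $\mathrm{Sp}(2n, \R)$ is connected, they can be joined by a smooth path of linear symplectic isomorphisms, which (shrinking $\rho$ if needed) is realized by a Hamiltonian isotopy of $W_i$ supported in a compact subset of $W_i$, via the symplectic isotopy extension theorem applied inside the Darboux chart. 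Patching these local isotopies across the disjoint $W_i$ yields $\psi_2 \in \mathrm{Symp}_c(X,\omega)$ such that $\psi_2 \circ \psi_1 \circ \ph_i$ and $\ph'_i$ agree to first order at $0$ for each $i$.

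Finally, two symplectic embeddings of $(B^{2n}(\rho), d(\ep \lambda_{2n}))$ into a symplectic manifold that agree at the origin and have equal differentials there can be matched on a smaller concentric ball $B^{2n}(r)$ by a symplectomorphism supported near their images; this is the standard equivariant Darboux argument, proved via Moser's trick applied to the straight-line interpolation of the two embeddings inside a Darboux chart. Applying this inside each disjoint $W_i$ and composing with $\psi_1$ and $\psi_2$ produces the required $\psi$ and $r$. The main obstacle is to ensure that all perturbations remain compactly supported with disjoint supports so that the local modifications can be combined into a single global symplectomorphism; this is handled throughout by localizing everything inside the pairwise disjoint Darboux neighborhoods $W_i$ constructed at the start of the second step.
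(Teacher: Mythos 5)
Your proposal follows the same skeleton as the paper's proof: first use connectedness of $X$ to move the centers $\varphi_i(0)$ onto $\varphi'_i(0)$ by a compactly supported Hamiltonian isotopy, then use connectedness of $\mathrm{Sp}(2n,\R)$ to match the differentials at the centers by isotopies supported in disjoint Darboux neighborhoods; these are exactly the paper's two reduction steps. The only divergence is the final local normalization, where the paper invokes the generating function technique and you invoke a Moser-type ``equivariant Darboux'' argument. The conclusion you need there is indeed a standard fact (two symplectic embeddings of a ball with the same $1$-jet at the center agree on a smaller concentric ball after composing with a compactly supported ambient symplectomorphism), but your stated justification has a small wrinkle: the straight-line interpolation $(1-t)\varphi_i + t\varphi'_i$ is not an isotopy through \emph{symplectic} embeddings, and after correcting it by Moser's trick the time-one map is $\varphi'_i\circ\rho_1$ for some symplectomorphism germ $\rho_1$ fixing the center with identity derivative, not $\varphi'_i$ itself; so this argument alone does not yet give exact agreement with $\varphi'_i$ on a smaller ball. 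The standard fix is to work instead with the germ $F:=\varphi'_i{}^{-1}\circ\varphi_i$ (defined near $0$ since the image of $\varphi'_i$ is open) and use the rescaling isotopy $F_t(x):=F(tx)/t$, which is a genuine symplectic isotopy of embeddings from the inclusion to $F$ and is Hamiltonian on the ball, so the Hamiltonian isotopy extension theorem supported in $W_i$ finishes the step; alternatively one cuts off a generating function of $F$, which is precisely the paper's route. With that repair (or with the generating function cutoff), your proof is correct and is essentially the paper's argument with a different, equally standard, final local step.
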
 
\begin{proof} 
For each $i$, let 
$x_i:= \varphi_i(0,\ldots,0)$ and 
$x'_i:= \varphi'_i(0,\ldots,0)$. 
Since $x_1,\ldots,x_k$ and $x'_1,\ldots, x'_k$ are distinct and $X$ is connected, 
we may assume that $x_i = x'_i$ for any $i$. 
Moreover, since $\mathrm{Sp}(2n, \R)$ is connected, we may assume that 
$(d\ph_i)_{(0,\ldots,0)} = (d\ph'_i)_{(0,\ldots,0)}$ for any $i$. 
Then, for each $i$, there exists a pair $(U_i, \psi_i)$ 
such that 
\begin{itemize} 
\item $U_i$ is an open neighborhood of $x_i$ 
such that $U_i \subset \image \ph_i$, 
\item 
$\psi_i: U_i \to X$ is an embedding such that 
$\psi_i^*\omega=\omega$ and $\ph'_i|_{\ph_i^{-1}(U_i)}  = \psi_i \circ \ph_i|_{\ph_i^{-1}(U_i)}$, 
\item 
$\psi_i(x_i)=x_i$, $(d\psi_i)_{x_i} = \id_{T_{x_i}X}$. 
\end{itemize} 
Then lemma \ref{lem_balls_in_symplectic_manifolds} follows from 
Lemma \ref{lem_generating_function} which we state below. 
\end{proof} 

\begin{lem}\label{lem_generating_function} 
Let $U$ be an open neighborhood of $(0,\ldots, 0)$ in $\R^{2n}$, 
and $f: U \to \R^{2n}$ be an open embedding such that $f^*(d\lambda_{2n})=d\lambda_{2n}$. 
Suppose that $f(0,\ldots,0)=(0,\ldots,0)$ and
$(df)_{(0,\ldots,0)}=\id_{\R^{2n}}$. 
Then, for any open neighborhood $V$ of $(0,\ldots,0)$ in $\R^{2n}$, 
there exists a diffeomorphism $g$ of $\R^{2n}$
such that $g^*(d\lambda_{2n})=d\lambda_{2n}$, 
$\supp g \subset V$, and 
$f \equiv g$ on some neighborhood  $(0,\ldots,0)$. 
\end{lem}
\begin{proof} 
Let us abbreviate $q_1,\ldots, q_n$ by $q$, $p_1,\ldots, p_n$ by $p$, 
and $(0,\ldots, 0) \in \R^n$ by $0$. 
Moreover, let 
\begin{align*} 
\Gamma&:= \{ (q,p,Q,P) \in \R^{2n} \times \R^{2n} \mid f(q,p)=(Q,P)\}, \\ 
\pi&: \R^{2n} \times \R^{2n} \to \R^n \times \R^n; \, (q,p,Q,P) \mapsto (Q,p). 
\end{align*} 
By $f(0,0)=(0,0)$ and $(df)_{(0,0)} = \id_{\R^{2n}}$, 
there exists a contractible open neighborhood $U'$ of 
$(0,0) \in \R^n \times \R^n$ such that 
$\pi|_{\tilde{U'}}: \tilde{U'} \to U'$ is a diffeomorphism, where $\tilde{U'}:= \pi^{-1}(U') \cap \Gamma$. 
Since $\sigma:= qdp+PdQ \in \Omega^1(\tilde{U'})$ is exact, 
there exists $F \in C^\infty(U')$ such that 
$F(0,0)=0$ 
and $\sigma = d ( (\pi|_{\tilde{U'}})^*F)$. 
Then 
\[ 
f(q,p) = (Q,P) \iff  q = \del_pF(Q,p), \quad P = \del_Q F(Q,p)
\] 
for any $(q,p,Q,P) \in \pi^{-1}(U')$. 
Such a function $F$ is called a generating function; see \cite{Hofer_Zehnder} Appendix A.1. 

Let $G(Q,p):= F(Q,p) - Q \cdot p$. 
Then the $m$-th derivative of $G$ at $(0,0)$ vanishes for $m=0, 1, 2$. 
Take an open neighborhood $V'$ of $0$ in $\R^n$ such that 
$V' \times V' \subset V$. 
For any $\ep \in \R_{>0}$, 
there exists $G_\ep \in C^\infty(\R^n \times \R^n)$ such that 
$\supp G_\ep \subset V' \times V'$, $\|G_\ep \|_{C^2} < \ep$ 
and $G_\ep \equiv G$ on some neighborhood of $(0, 0)$. 
Let $F_\ep(Q,p):=G_\ep(Q,p)+Q \cdot p$. 
If $\ep$ is sufficiently close to $0$, 
there exists a diffeomorphism $g$ of $\R^{2n}$ such that 
\[ 
g(q,p) = (Q,P) \iff q = \del_p F_\ep(Q,p), \quad P = \del_Q F_\ep(Q,p). 
\] 
Then $g$ satisfies the requirements of this lemma. 
\end{proof}

Let $(Y, \xi)$ be a closed and connected contact manifold of dimension $2n-1$. 
Take $\lambda \in \Lambda(Y, \xi)$,
$h \in C^\infty(Y, \R_{\ge 0}) \setminus \{0\}$, 
an embedding $\ph: B^{2n}(1) \to U_h$
and $\eta \in \Omega^1(Y \times \R)$ 
such that 
$\supp (\eta - e^r \lambda) \subset U_h$, 
$d\eta = d(e^r \lambda)$, 
and $\ph^*\eta = \ep \lambda_{2n}$.
Define $Z \in \mca{X}(Y \times \R)$ by 
$i_Z (d\eta)= \eta$. 

Let $X:= Y \times \R \setminus \{ \ph(0, \ldots, 0) \}$. 
Define an embedding $i_+: Y \times \R \to X$ by 
\[ 
i_+(y, 0) = (y, h(y)) \quad (\forall y \in Y),  \quad 
(i_+)_* ( \del_r) = Z \circ i_+. 
\] 
Define an embedding $i_-: (S^{2n-1} \sqcup Y) \times \R \to X$ by 
\[ 
i_-(z,0) = \ph(z) \quad (\forall z \in S^{2n-1}), \quad 
i_-(y, 0) = (y, 0) \quad  (\forall y \in Y), \quad 
(i_-)_*( \del_r) = Z \circ i_-. 
\] 
Then $[(X, \eta|_X, i_+, i_-)] \in \Hom( (Y, e^h \lambda),  (S^{2n-1}, \ep \lambda_{2n}) \sqcup (Y, \lambda) )$. 

\begin{center}
\includegraphics[width=5cm]{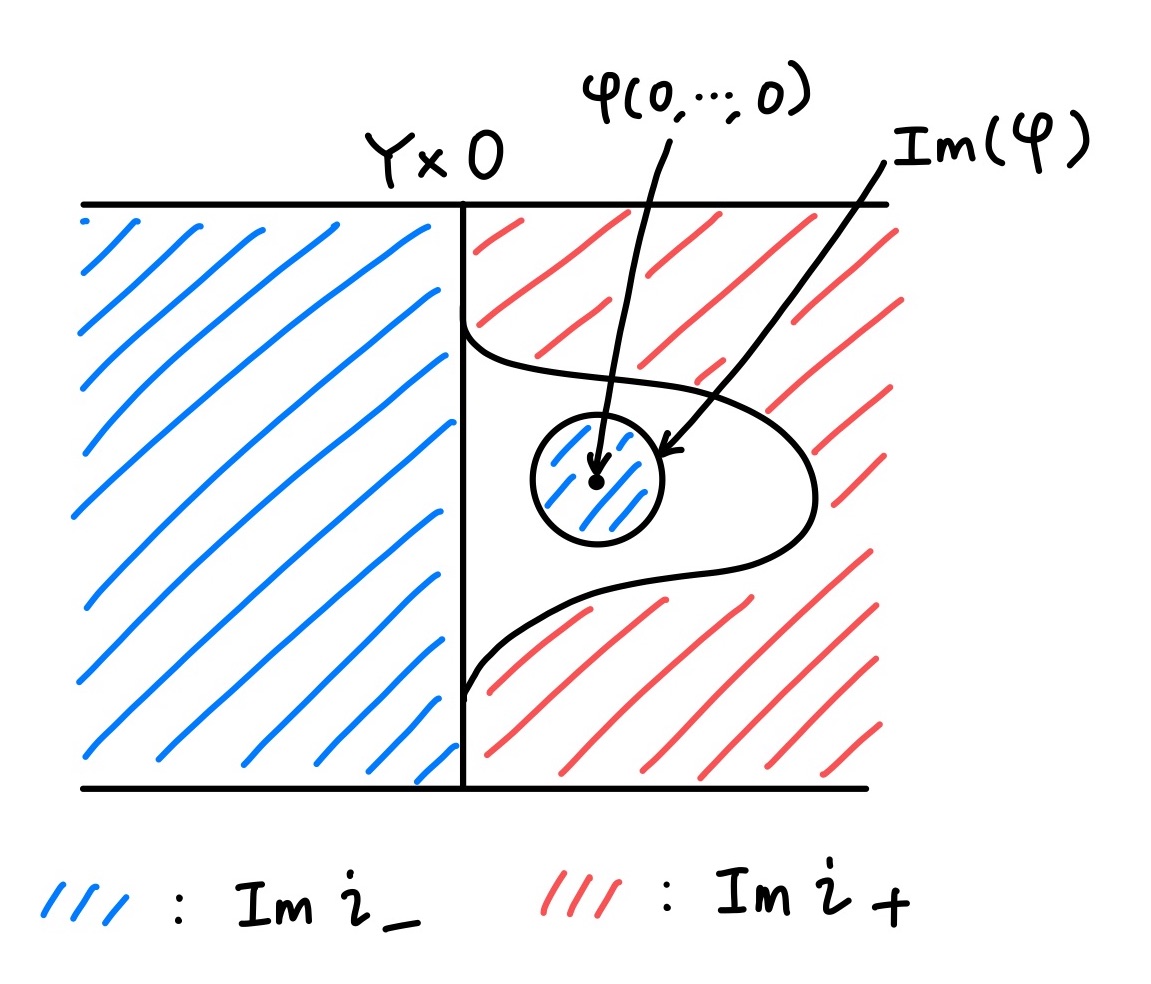}
\end{center} 

Since $\Phi$ is a strong monoidal functor, 
we can define a linear map 
\begin{equation}\label{eqn_Y_Y_S}
\Phi (Y, \xi) \to \Phi (S^{2n-1}, \xi_{2n-1}) \otimes \Phi(Y, \xi). 
\end{equation}
This map (\ref{eqn_Y_Y_S}) does not depend on the choice of $\eta$, $h$, $\ep$ and $\ph$. 
Indeed, (\ref{eqn_Y_Y_S}) does not depend on $\eta$ by the definition of morphisms in $\cont_{2n-1}$. 
Moreover, since $Y$ is connected, 
applying Lemma \ref{lem_balls_in_symplectic_manifolds} 
for  $(X, \omega) = (Y \times \R, d (e^r\lambda))$ and $k=1$, 
we conclude that (\ref{eqn_Y_Y_S}) does not depend on $h$, $\ep$ and $\ph$. 

Putting $(Y, \xi) = (S^{2n-1}, \xi_{2n-1})$ into (\ref{eqn_Y_Y_S}), we obtain 
\begin{equation}\label{eqn_S_S_S}
\Delta:  \Phi (S^{2n-1},  \xi_{2n-1}) \to \Phi (S^{2n-1} , \xi_{2n-1})^{\otimes 2}. 
\end{equation} 
Let us consider 
\[ 
\langle \, , \, \rangle:   A_\Phi^{\otimes 2} \otimes \Phi(S^{2n-1}, \xi_{2n-1})^{\otimes 2} \to K; \quad 
(\alpha \otimes \beta) \otimes (a \otimes b) \mapsto  (-1)^{|a||\beta|} \alpha(a)  \beta(b), 
\] 
and define $\nabla: A_\Phi^{\otimes 2} \to A_\Phi$ by 
$\nabla ( \alpha \otimes \beta) (a) := \langle \alpha \otimes \beta,   \Delta(a) \rangle$. 

\begin{prop}\label{prop_A_phi} 
$(A_\Phi, \nabla)$ is a graded-commutative and associative algebra. 
$A_\Phi \otimes \Phi(Y, \xi) \to \Phi(Y,\xi)$, obtained from (\ref{eqn_Y_Y_S}), 
makes $\Phi(Y, \xi)$ an  $A_\Phi$-module. 
Moreover, for any $\sigma \in \Phi(Y,\xi)$, $\alpha \in A_\Phi$ and $\lambda \in \Lambda(Y,\xi)$, 
there holds $c^\Phi_{\alpha\sigma}(\lambda) \le c^\Phi_\sigma(\lambda)$. 
\end{prop}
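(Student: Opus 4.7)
The algebra and module structures are dual to a coalgebra structure on $\Phi(S^{2n-1}, \xi_{2n-1})$ given by $\Delta$ of (\ref{eqn_S_S_S}) and a coaction on $\Phi(Y, \xi)$ given by the map (\ref{eqn_Y_Y_S}), which I shall call $\Delta_Y$. The desired algebraic identities for $\nabla$ and the module action follow formally, via the symmetric monoidal structure of $\fvect^{\Z/2}_K$, from the following three facts: (i) $(\Delta \otimes \id) \circ \Delta = (\id \otimes \Delta) \circ \Delta$ (coassociativity), (ii) $(\Delta \otimes \id) \circ \Delta_Y = (\id \otimes \Delta_Y) \circ \Delta_Y$ (the coaction identity, which under dualization is exactly the module axiom $(\alpha\beta)\sigma = \alpha(\beta\sigma)$), and (iii) $\tau \circ \Delta = \Delta$ with $\tau$ the graded swap (cocommutativity, which gives graded-commutativity of $\nabla$).

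For (i) and (ii), I construct a ``double-ball removal'' cobordism. Given $(Y, \lambda)$, $h$, and disjoint embeddings $\ph_1, \ph_2 : B^{2n}(1) \to U_h$ of the same conformal factor $\ep$, successive applications of Lemma \ref{lem_one_form} produce a single $\eta \in \Omega^1(Y \times \R)$ satisfying $\supp(\eta - e^r\lambda) \subset U_h$, $d\eta = d(e^r \lambda)$ and $\ph_j^*\eta = \ep \lambda_{2n}$ for $j = 1, 2$. Then $X := (Y \times \R) \setminus \{\ph_1(0), \ph_2(0)\}$ equipped with $\eta|_X$ represents a morphism $\mu \in \Hom\bigl((Y, e^h\lambda),\, (S^{2n-1}, \ep\lambda_{2n})^{\sqcup 2} \sqcup (Y, \lambda)\bigr)$. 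The morphism $\mu$ factors in $\cont_{2n-1}$ in two ways: either remove $\ph_1$'s center first (yielding $\Delta_Y$) and then remove $\ph_2$'s center from the $(Y, \lambda)$-output factor (yielding a factor $\id \otimes \Delta_Y$), or remove $\ph_1$'s center first and then remove a point from the newly created $S^{2n-1}$-output symplectization (yielding $\Delta \otimes \id$). Lemma \ref{lem_balls_in_symplectic_manifolds} guarantees that any repositioning and reordering of the two ball embeddings yields equivalent generalized exact cobordisms, so both factorizations represent the same $\mu$; functoriality of $\Phi$ then gives (ii), and specializing to $Y = S^{2n-1}$ gives (i).

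For (iii), take $Y = S^{2n-1}$ in the above; the resulting $\mu$ is symmetric in $\ph_1$ and $\ph_2$. Lemma \ref{lem_balls_in_symplectic_manifolds} applied with $k = 2$ to the two ball-pair configurations (original and swapped) produces a compactly supported symplectomorphism of $S^{2n-1} \times \R$ that induces an equivalence of generalized exact cobordisms exchanging the two $S^{2n-1}$ output ends. Since $\Phi$ is symmetric strong monoidal, this yields $\tau \circ \Delta = \Delta$.

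For the action bound, fix $\lambda$, $\sigma$, $\alpha$, and $\delta > 0$. By $C^0$ continuity of $c^\Phi_\sigma$, choose $h \in C^\infty(Y, \R_{> 0})$ with $\|h\|_{C^0}$ small enough that $c^\Phi_\sigma(e^h\lambda) < c^\Phi_\sigma(\lambda) + \delta$. Pick small $\ep$ and an embedding $\ph$, and let $F: \Phi(Y, e^h\lambda) \to \Phi(S^{2n-1}, \ep\lambda_{2n}) \otimes \Phi(Y, \lambda)$ be the cobordism map underlying $\Delta_Y$; $F$ preserves filtrations by definition of morphisms in $\cont_{2n-1}$. Set $\tilde\sigma := I_{e^h\lambda}^{-1}(\sigma)$ and let $\tilde\alpha \in \Phi(S^{2n-1}, \ep\lambda_{2n})^\vee$ be the pullback of $\alpha$ under the canonical isomorphism. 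By construction of the module action, $I_\lambda^{-1}(\alpha\sigma) = i_{\tilde\alpha}(F(\tilde\sigma))$. Lemma \ref{lem_level_decreasing} together with $|\tilde\alpha| \ge 0$ gives $|i_{\tilde\alpha}(F(\tilde\sigma))| \le |F(\tilde\sigma)| \le |\tilde\sigma| = c^\Phi_\sigma(e^h\lambda) < c^\Phi_\sigma(\lambda) + \delta$, so $c^\Phi_{\alpha\sigma}(\lambda) < c^\Phi_\sigma(\lambda) + \delta$; letting $\delta \to 0$ yields the bound. The main obstacle throughout is the categorical bookkeeping in $\cont_{2n-1}$: verifying that two apparently different double-ball cobordism constructions yield equivalent morphisms, which rests on Lemma \ref{lem_balls_in_symplectic_manifolds} and a judicious choice of the connecting one-form via Lemma \ref{lem_one_form}. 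The analytic content of the action bound, by contrast, is straightforward.
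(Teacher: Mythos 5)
Your proposal is correct and follows essentially the same route as the paper: the algebra and module identities are reduced, via Lemma \ref{lem_one_form} and Lemma \ref{lem_balls_in_symplectic_manifolds}, to the independence of the ball-removal cobordisms from the configuration of the removed balls (the paper invokes exactly this lemma with $k=2$ and $k=3$; your derivation of associativity from the coaction identity with two balls is only a cosmetic variant). The action estimate is likewise the paper's argument -- filtration preservation of the cobordism map, Lemma \ref{lem_level_decreasing} with $|\alpha \circ I_{\ep\lambda_{2n}}| \ge 0$, and a limiting step (the paper rescales by $e^a\lambda$ and lets $a \to 0$, you use $C^0$-continuity; these are the same).
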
 
\begin{proof} 
To prove graded commutativity (resp. associativity) of $A_\Phi$, 
apply Lemma \ref{lem_balls_in_symplectic_manifolds} 
for $(X, \omega)= (\R^{2n}, d \lambda_{2n})$ and $k=2$ (resp. $k=3$). 
To prove that $\Phi(Y, \xi)$ is an $A_\Phi$-module, 
take $\lambda \in \Lambda(Y, \xi)$ and apply Lemma \ref{lem_balls_in_symplectic_manifolds} 
for $(X, \omega) = ( Y \times \R , d (e^r \lambda))$ and $k=2$. 

To prove the last assertion, take $a \in \R_{>0}$ arbitrarily, 
and take $\ep \in \R_{>0}$ and an embedding $\ph: B^{2n}(1)  \to Y \times (0, a)$ 
so that $\ph^* d(e^r\lambda) = d(\ep \lambda_{2n})$. 
Recall that for any vector spaces $V$, $W$ and $\psi \in V^{\vee}$, 
we defined $i_\psi: V \otimes W \to W$ by 
$i_\psi(v \otimes w):= \psi(v) w$. 
Then we have the following commutative diagram: 
\[
\xymatrix{ 
\Phi(Y, e^a\lambda) \ar[d]_-{I_{e^a\lambda}}\ar[r]  & \Phi(S^{2n-1}, \ep \lambda_{2n})  \otimes \Phi(Y, \lambda) \ar[d]_-{I_{\ep\lambda_{2n}} \otimes I_\lambda}\ar[r]^-{i_{\alpha \circ I_{\ep \lambda_{2n}}} }& \Phi(Y, \lambda)\ar[d]_-{I_\lambda} \\ 
\Phi(Y, \xi) \ar[r]&\Phi(S^{2n-1}, \xi_{2n-1})  \otimes \Phi(Y, \xi) \ar[r]_-{i_\alpha} & \Phi(Y, \xi). }
\] 
The composition of the  lower maps sends $\sigma$ to $\alpha \sigma$, 
and the upper maps preserve filtrations. 
Thus $c^\Phi_{\alpha\sigma}(\lambda) \le c^\Phi_\sigma(e^a\lambda) = e^a c^\Phi_\sigma(\lambda)$. 
Since $a$ can be arbitrarily close to $0$, we obtain $c^\Phi_{\alpha\sigma}(\lambda) \le c^\Phi_\sigma(\lambda)$. 
\end{proof} 

\subsection{A sufficient criterion for strong closing property} 

Let us take $\lambda_{2n}$ as the ``standard'' contact form on $(S^{2n-1}, \xi_{2n-1})$, 
and consider the canonical isomorphism 
$I_{\lambda_{2n}}: \Phi (S^{2n-1}, \lambda_{2n}) \to \Phi( S^{2n-1}, \xi_{2n-1})$. 
For any $\alpha \in A_\Phi \setminus \{0\}$, 
let 
\begin{equation}\label{eqn_alpha} 
|\alpha|:= |\alpha \circ I_{\lambda_{2n}} | = 
\sup  \, \{ a \mid \alpha \circ  I_{\lambda_{2n}} |_{ F^a\Phi (S^{2n-1}, \lambda_{2n})} = 0 \}. 
\end{equation}
Let $A^+_\Phi:= \{ \alpha \in A_\Phi \setminus \{0\} \mid |\alpha|>0\}$. 

\begin{thm}\label{mainthm} 
Let $(Y, \xi )$ be a closed and connected contact manifold of dimension $2n-1$, 
and $\lambda \in \Lambda(Y,\xi)$. 
If there exists an action selecting functor $\Phi$ such that 
\begin{equation}\label{eqn_criterion} 
\inf_{ \substack{ \sigma \in \Phi(Y, \xi) \setminus \{0\} \\ \alpha \in A^+_\Phi}}
\frac{ c^\Phi_\sigma(\lambda) - c^\Phi_{\alpha \sigma}(\lambda)}{|\alpha|} = 0, 
\end{equation} 
then $\lambda$ satisfies positive and negative strong closing property. 
\end{thm}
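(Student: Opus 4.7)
The plan is to prove the contrapositive. The argument for positive and negative strong closing property is essentially identical; I focus on the positive case and indicate at the end how the negative case is handled. So suppose positive strong closing property fails. Applying Lemma \ref{lem_local_sensitivity}(i) to the action selectors $c^\Phi_\sigma$ produced by $\Phi$, one obtains $h \in C^\infty(Y, \R_{\ge 0}) \setminus \{0\}$ such that
\[
c^\Phi_\sigma(e^h \lambda) = c^\Phi_\sigma(\lambda) \quad \text{for every } \sigma \in \Phi(Y, \xi) \setminus \{0\}.
\]
Since $U_h := \{(y,r) \in Y \times \R \mid 0 < r < h(y)\}$ is a nonempty open subset of the symplectic manifold $(Y \times \R, d(e^r \lambda))$, Darboux's theorem supplies an embedding $\ph: B^{2n}(1) \to U_h$ and $\ep \in \R_{>0}$ with $\ph^* d(e^r \lambda) = d(\ep \lambda_{2n})$.

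Using Lemma \ref{lem_one_form} and the construction preceding (\ref{eqn_Y_Y_S}), this data assembles into a morphism $\mu \in \Hom((Y, e^h \lambda), (S^{2n-1}, \ep \lambda_{2n}) \sqcup (Y, \lambda))$ in $\cont_{2n-1}$. Since $\Phi$ is strong monoidal, $\Phi(\mu)$ becomes a filtration-preserving map $\Phi(Y, e^h \lambda) \to \Phi(S^{2n-1}, \ep \lambda_{2n}) \otimes \Phi(Y, \lambda)$. For each $\alpha \in A_\Phi^+$, further composition with $i_{\alpha \circ I_{\ep \lambda_{2n}}}$ yields a filtration-preserving map $F_\alpha: \Phi(Y, e^h \lambda) \to \Phi(Y, \lambda)$. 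The same diagram chase as in the proof of Proposition \ref{prop_A_phi}, combined with Lemma \ref{lem_balls_in_symplectic_manifolds} applied to $(Y \times \R, d(e^r \lambda))$ to move $\ph(B^{2n}(1))$ to the ball used in the construction of (\ref{eqn_Y_Y_S}), shows that $F_\alpha$ coincides, under the canonical isomorphisms $I_{e^h \lambda}$ and $I_\lambda$ onto $\Phi(Y, \xi)$, with the module action $\sigma \mapsto \alpha \sigma$.

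The quantitative estimate now follows from Lemma \ref{lem_level_decreasing} applied to $i_{\alpha \circ I_{\ep \lambda_{2n}}}$, together with the identity $|\alpha \circ I_{\ep \lambda_{2n}}| = \ep |\alpha|$, which is the dual form of the scaling isomorphism $\ph_{\ep, \infty} = \Phi(\mu_{\ep \lambda_{2n}, \lambda_{2n}})$ of Remark \ref{rem_scaling_invariance} combined with conformality. Filtration-preservation of $\Phi(\mu)$ and Lemma \ref{lem_level_decreasing} give, for every $\sigma \in \Phi(Y, \xi) \setminus \{0\}$ with $\alpha \sigma \ne 0$,
\[
c^\Phi_{\alpha \sigma}(\lambda) \le c^\Phi_\sigma(e^h \lambda) - \ep |\alpha| = c^\Phi_\sigma(\lambda) - \ep |\alpha|,
\]
the last equality being the failure hypothesis. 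Pairs with $\alpha \sigma = 0$ contribute ratio $+\infty$ to (\ref{eqn_criterion}) and may be discarded, so the infimum in (\ref{eqn_criterion}) is at least $\ep > 0$, contradicting the hypothesis. The negative case is obtained either by running the ball-cobordism upside down, or by reducing to the positive case via the substitution $(\lambda, h) \leadsto (e^h \lambda, -h)$ with $-h \in C^\infty(Y, \R_{\ge 0}) \setminus \{0\}$.

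The main obstacle will be the diagram chase identifying $F_\alpha$ with the module action, since $\mu$ is defined using a perturbation of $e^r \lambda$ localized in the possibly very thin region $U_h$ rather than in a standard cylinder $Y \times (0, a)$. The key point is that Lemma \ref{lem_balls_in_symplectic_manifolds} together with the equivalence relation on cobordisms in $\cont_{2n-1}$ (which allows compactly-supported exact perturbations of $\lambda$) guarantees that $\Phi(\mu)$ depends only on the capacity $\ep$ of the embedded ball and not on its location within $U_h$; once this naturality is in hand, $F_\alpha$ is identified with the map appearing in the colimit definition of the $A_\Phi$-module structure on $\Phi(Y, \xi)$, and the remainder is a direct application of Lemma \ref{lem_level_decreasing} to the filtered tensor product.
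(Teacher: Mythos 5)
Your proposal is correct and follows essentially the same route as the paper: reduce via Lemma \ref{lem_local_sensitivity}, embed a Darboux ball of capacity $\ep$ in $U_h$, use the cobordism to $(S^{2n-1},\ep\lambda_{2n})\sqcup(Y,\lambda)$ together with Lemma \ref{lem_level_decreasing} and the scaling identity $|\alpha\circ I_{\ep\lambda_{2n}}|=\ep|\alpha|$ to get $c^\Phi_\sigma(\lambda)-c^\Phi_{\alpha\sigma}(\lambda)\ge\ep|\alpha|$, contradicting (\ref{eqn_criterion}). Your substitution $(\lambda,h)\leadsto(e^h\lambda,-h)$ for the negative case is a harmless variant of the paper's mirrored ball embedding and also works, since the failure hypothesis identifies the selectors at $\lambda$ and $e^h\lambda$.
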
 

\begin{rem}\label{rem_mainthm} 
If there exists 
$\alpha \in A^+_\Phi$ such that 
$\inf_{\sigma \in \Phi(Y, \xi) \setminus \{0\}} c_\sigma^\Phi(\lambda) - c_{\alpha \sigma}^\Phi(\lambda)=0$, 
then (\ref{eqn_criterion}) obviously holds. 
\end{rem} 
\begin{proof}
We prove that $\lambda$ satisfies positive strong closing property. 
By Lemma \ref{lem_local_sensitivity}, 
it is sufficient to show the following: 
for any $h \in C^\infty(Y, \R_{\ge 0}) \setminus \{0\}$, 
there exists $\sigma \in \Phi(Y,\xi)\setminus \{0\}$
such that $c^\Phi_\sigma(e^h\lambda) > c^\Phi_\sigma(\lambda)$. 

Suppose that this does not hold, i.e. 
there exists $h \in C^\infty(Y, \R_{\ge 0}) \setminus \{0\}$ such that 
$c^\Phi_\sigma(e^h\lambda)=c^\Phi_\sigma(\lambda)$ for any $\sigma \in \Phi(Y, \xi) \setminus \{0\}$. 
Take $\ep \in \R_{>0}$ such that there exists an embedding 
$\ph: B^{2n}(1)\to  \{(y,r) \in Y \times \R \mid 0 < r < h(y) \}$
which satisfies 
$\ph^*d(e^r\lambda) = d (\ep \lambda_{2n})$. 
For any $\alpha \in A_\Phi$, consider the following commutative diagram: 
\[
\xymatrix{ 
\Phi(Y, e^h\lambda) \ar[d]_-{I_{e^h\lambda}}\ar[r]  & \Phi(S^{2n-1}, \ep \lambda_{2n}) \otimes \Phi(Y, \lambda)  \ar[d]_-{I_{\ep\lambda_{2n}} \otimes I_\lambda}\ar[r]^-{i_{\alpha \circ I_{\ep \lambda_{2n}}}}& \Phi(Y, \lambda)\ar[d]_-{I_\lambda} \\ 
\Phi(Y, \xi) \ar[r]& \Phi(S^{2n-1}, \xi_{2n-1}) \otimes \Phi (Y, \xi)  \ar[r]_-{i_\alpha} & \Phi(Y, \xi). }
\] 
For any $\sigma \in \Phi(Y, \xi)$, 
the composition of the lower maps sends $\sigma$ to $\alpha \sigma$. 
Thus we have 
\[ 
c^\Phi_\sigma(e^h\lambda) - c^\Phi_{\alpha \sigma}(\lambda) \ge | \alpha \circ I_{\ep \lambda_{2n}} | =
\ep |\alpha \circ I_{\lambda_{2n}}| = \ep |\alpha|,
\] 
where the inequality follows from Lemma \ref{lem_level_decreasing}, 
the first equality follows from Remark \ref{rem_scaling_invariance}, 
and the second equality follows from (\ref{eqn_alpha}). 

If $c^\Phi_\sigma(e^h\lambda)=c^\Phi_\sigma(\lambda)$ for any $\sigma \in \Phi (Y, \xi) \setminus \{0\}$, 
then $c^\Phi_\sigma(\lambda) - c^\Phi_{\alpha\sigma}(\lambda) \ge \ep |\alpha|$
for any $\sigma \in \Phi(Y,\xi) \setminus \{0\}$ and $\alpha \in A_\Phi$, 
which contradicts the assumption. 

To prove that $\lambda$ satisfies negative strong closing property, 
let us assume that there exists $h \in C^\infty(Y, \R_{\le 0}) \setminus \{0\}$ 
such that $c^\Phi_\sigma(e^h\lambda) = c^\Phi_\sigma(\lambda)$ for any $\sigma \in \Phi(Y, \xi) \setminus \{0\}$. 
Take $\ep \in \R_{>0}$ and an embedding 
$\ph: B^{2n}(1) \to \{ (y,r) \in Y \times \R \mid h(y) < r < 0 \}$
such that $\ph^*d(e^r\lambda) = d(\ep \lambda_{2n})$. 
Then we get a contradiction by arguments similar to the above. 
\end{proof} 

\begin{rem} 
It seems that the mechanism in the above proof is similar to the 
``relation between spectral gaps and creation of periodic orbits'' in \cite{Edtmair_Hutchings} Section 6. 
\cite{Edtmair_Hutchings} also obtains a quantitative estimate of periods (\cite{Edtmair_Hutchings} Proposition 6.2), 
which leads to quantitative closing lemmas for area-preserving diffeomorphisms (see \cite{Edtmair_Hutchings} Section 1.2). 
\end{rem}

\subsection{Proof of Theorem \ref{thm_SCP_dim3}}
Let us recall some facts about ECH from \cite{CGHR}. 
Firstly, $\ech(S^3, \xi_3)$ has a basis $(\zeta_k)_{k \ge 0}$ 
which is characterized by $\zeta_0 = [\emptyset]$ and $U \zeta_{k+1} = \zeta_k \, (\forall k \ge 0)$, 
where $U$ denotes the $U$-map. 
Moreover $|\zeta_k|>0  \iff k>0$. 
Let us define $\alpha \in A^+_{\Phi^\ech}$ by $\alpha(\zeta_k):= \begin{cases} 1 &(k = 1),  \\ 0 &(k \ne 1). \end{cases}$
Secondly, for any closed and connected contact three-manifold $(Y, \xi)$, 
there exists a sequence $(\sigma_k)_{k \ge 0}$ in $\ech(Y, \xi) \setminus \{0\}$ 
which satisfies the following
(we abbreviate $c^{\Phi^\ech}$ by $c$): 
\begin{itemize} 
\item[(i):] $\ech(Y,\xi) \to \ech(S^3, \xi_3) \otimes \ech(Y,\xi)$ 
sends  $\sigma_k$ to $\sum_{j=0}^k  \zeta_j \otimes \sigma_{k-j}$ for any $k$. 
\item[(ii):]  For any $\lambda \in \Lambda(Y, \xi)$, there holds $c_{\sigma_k} (\lambda) = O(\sqrt{k})$. 
\end{itemize} 
\begin{rem} 
Such a sequence $(\sigma_k)_k$ can be constructed as follows. 
Take $\Gamma \in H_1(Y:\Z)$ such that 
$c_1(\xi) + 2 \mathrm{PD}(\Gamma) \in H^2(Y: \Z)$ is a torsion element, 
thus  $\ech(Y, \xi, \Gamma)$ has a relative $\Z$-grading. 
Then take $(\sigma_k)_k$ so that 
each $\sigma_k$ is a nonzero homogeneous element of $\ech(Y, \xi, \Gamma)$, 
and $U\sigma_k = \begin{cases} \sigma_{k-1} &(k \ge 1),  \\ 0 &(k=0). \end{cases}$ 
Then (i) follows from \cite{CGHR} Lemma 3.2, 
and (ii) follows from \cite{CGHR} Proposition 2.1. 
\end{rem} 

Let $(Y, \lambda) \in \cont_3$ such that $Y$ is connected. 
Take $(\sigma_k)_k$ for $(Y, \xi_\lambda)$. Then 
\[  
\inf_k  (c_{\sigma_{k+1}}(\lambda) - c_{\alpha \sigma_{k+1}}(\lambda))
= 
\inf_k  (c_{\sigma_{k+1}}(\lambda) - c_{\sigma_k}(\lambda))   \le 0, 
\] 
where the equality holds since (i) implies $\alpha \sigma_{k+1}=\sigma_k$ for any $k$, 
and the inequality follows from (ii). 
By Theorem \ref{mainthm} and Remark \ref{rem_mainthm}, 
$\lambda$ satisfies strong closing property. 
\qed

\begin{rem}\label{rem_rephrasing} 
The above argument is similar to the proof of the $C^\infty$ closing lemma for three-dimensional Reeb flows 
in real-analytic setting (\cite{Irie_remark} Theorem 4.1). 
It also looks similar to the proofs of quantitative closing lemmas for area-preserving 
diffeomorphisms (see \cite{Edtmair_Hutchings} Section 1.2). 
\end{rem}

\section{Boundaries of symplectic ellipsoids} 

For any $n \in \Z_{\ge 1}$, let 
$A_n:= \{ (a_1, \ldots, a_n) \in \R^n \mid 0 < a_1 \le \cdots \le a_n \}$. 
For any $a = (a_1, \ldots, a_n) \in A_n$, let 
\[ 
E_a:= \bigg\{ (q_1, \ldots, q_n, p_1, \ldots, p_n) \in \R^{2n} \biggm{|}  \sum_{i=1}^n \frac{\pi (q_i^2 + p_i^2)}{a_i}  \le 1 \bigg\}. 
\] 
Recall that $\lambda_{2n} = \sum_{i=1}^n \frac{p_i dq_i - q_i dp_i}{2}$. 
Then $\lambda_{2n}|_{\del E_a}$ is a contact form on $\del E_a$ with the Reeb flow 
$R_{\lambda_{2n}|_{\del E_a}} = \sum_{i=1}^n   \frac{2\pi}{a_i} \cdot ( p_i \del_{q_i} - q_i \del_{p_i})$. 

As a potential application of Theorem \ref{mainthm}, we propose the following conjecture. 

\begin{conj}\label{conj_ellipse} 
For any $n \in \Z_{\ge 1}$ and $a \in A_n$, 
$\lambda_{2n}|_{\del E_a}$ satisfies strong closing property. 
\end{conj}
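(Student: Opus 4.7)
The plan is to verify the criterion (\ref{eqn_criterion}) of Theorem \ref{mainthm} for $\Phi = \Phi^{\CH}$, $(Y, \xi) = (\partial E_a, \xi_{\lambda_{2n}|_{\partial E_a}})$, and $\lambda = \lambda_{2n}|_{\partial E_a}$. Since $(\partial E_a, \xi_{\lambda_{2n}|_{\partial E_a}})$ is contactomorphic to the standard contact sphere $(S^{2n-1}, \xi_{2n-1})$, the algebra $A_{\Phi^{\CH}}$ of Proposition \ref{prop_A_phi} acts on $\Phi^{\CH}(\partial E_a, \xi_{\lambda_{2n}|_{\partial E_a}})$, and the problem reduces to an algebraic question about this module structure together with the action filtration induced by $\lambda_{2n}|_{\partial E_a}$.

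First I would analyze the contact homology of $\partial E_a$ in the nondegenerate case where $a_1, \ldots, a_n$ are $\Q$-linearly independent; the general case then follows by a limiting argument using $C^0$ continuity of contact action selectors. For such $a$, the simple periodic Reeb orbits of $\lambda_{2n}|_{\partial E_a}$ are the coordinate circles $\gamma_1, \ldots, \gamma_n$ with periods $a_1, \ldots, a_n$, every iterate $\gamma_i^k$ is nondegenerate and good, and the Conley--Zehnder indices are explicit. For generic auxiliary data a parity argument forces the contact homology differential to vanish, so $\Phi^{\CH}(\partial E_a, \lambda_{2n}|_{\partial E_a})$ is, as a filtered vector space, freely spanned by monomials in the $\gamma_i^k$, with action filtration determined by the lattice
\[
\Sigma_a := \{ m_1 a_1 + \cdots + m_n a_n \mid m_1, \ldots, m_n \in \Z_{\ge 0} \}.
\]

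Second, I would produce an element $\alpha \in A^+_{\Phi^{\CH}}$ together with a sequence $(\sigma_k)_{k \ge 0}$ in $\Phi^{\CH}(\partial E_a, \xi_{2n-1}) \setminus \{0\}$ satisfying $\alpha \sigma_{k+1} = \sigma_k$ and $c^{\Phi^{\CH}}_{\sigma_k}(\lambda_{2n}|_{\partial E_a}) = T_k$, where $0 = T_0 < T_1 \le T_2 \le \cdots$ enumerates $\Sigma_a$ (with multiplicities). The element $\alpha$ plays the role of the $U$-map used in Section 4.3: by Proposition \ref{prop_A_phi} it arises from the cobordism constructed by removing a small standard symplectic ball from $(\partial E_a) \times \R$, and its dual class should be the minimal positive-action generator of $\Phi^{\CH}(S^{2n-1}, \xi_{2n-1})$. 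A standard lattice point count then gives $T_k = C_{a,n} k^{1/n} + o(k^{1/n})$ for an explicit constant $C_{a,n} > 0$, so in particular $\liminf_k (T_{k+1} - T_k) = 0$. Combined with the previous step this yields
\[
\inf_k \bigl( c^{\Phi^{\CH}}_{\sigma_{k+1}}(\lambda_{2n}|_{\partial E_a}) - c^{\Phi^{\CH}}_{\alpha \sigma_{k+1}}(\lambda_{2n}|_{\partial E_a}) \bigr) = \inf_k (T_{k+1} - T_k) = 0,
\]
and Remark \ref{rem_mainthm} together with Theorem \ref{mainthm} delivers strong closing property for $\lambda_{2n}|_{\partial E_a}$.

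The main obstacle is the construction of $\alpha$ and $(\sigma_k)$ with the stated properties. In dimension three the analogous construction is transparent thanks to the $U$-map on ECH and the volume theorem of \cite{CGHR}. In higher dimensions one must work directly inside the SFT-type contact homology of \cite{Pardon} (or \cite{Bao_Honda}, \cite{Ishikawa}), identify the point-insertion cobordism map with the module action of an explicit element $\alpha \in A_{\Phi^{\CH}}$, and verify that this map realizes the lattice shift $\sigma_{k+1} \mapsto \sigma_k$ exactly, rather than merely up to a bounded action error. Establishing this sharp compatibility between the abstract module structure of Proposition \ref{prop_A_phi} and the combinatorics of iterated Reeb orbits on an ellipsoid appears to be the technical core of the approach proposed in \cite{CDPT}.
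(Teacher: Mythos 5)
The statement you are proving is Conjecture \ref{conj_ellipse}: the paper itself contains no proof, only a pointer to \cite{CDPT} together with a one-sentence description of that strategy, so there is nothing to match your argument against except that outline. Measured against it, your proposal is a plan rather than a proof, and the decisive step is exactly the one you leave open: the existence of $\alpha \in A^+_{\Phi^{\CH}}$ and a sequence $(\sigma_k)$ with $\alpha\sigma_{k+1}=\sigma_k$ and $c^{\Phi^{\CH}}_{\sigma_k}(\lambda_{2n}|_{\del E_a})=T_k$ running through \emph{consecutive} points of $\Sigma_a$. There is no known $U$-map-type structure on full contact homology that realizes the point-insertion (ball-removal) cobordism as an exact shift through the action spectrum; what \cite{CDPT} actually aim for is weaker, namely an effective upper bound on the action drop (``spectral gap'') of this map, obtained in the periodic case via the special structure of the Reeb flow, not an exact enumeration of $\Sigma_a$ by an action-minimizing basis. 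Since everything after this step is bookkeeping (the lattice count $T_k\sim C_{a,n}k^{1/n}$ and Theorem \ref{mainthm} with Remark \ref{rem_mainthm} are fine), the heart of the argument is missing.

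A second genuine gap is the claim that the general case ``follows by a limiting argument using $C^0$ continuity of contact action selectors.'' $C^0$ continuity only gives $e^{\min h}c_\sigma(\lambda)\le c_\sigma(e^h\lambda)\le e^{\max h}c_\sigma(\lambda)$, i.e.\ an error in $c_\sigma$ proportional to $\|h\|_{C^0}\cdot c_\sigma(\lambda)$, which grows with the action of $\sigma$. In your scheme the small gaps $T_{k+1}-T_k$ occur only at arbitrarily large $k$, hence at unboundedly large action, so when you perturb from a $\Q$-linearly independent $a'$ to a general $a$ the approximation error can dominate the gap unless you have quantitative control tying together the gap size, the action level at which it occurs, and the quality of the approximation $a'\approx a$. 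This is precisely why the proof proposed in \cite{CDPT} runs the approximation in the opposite direction: an effective gap estimate in the periodic (all-orbits-periodic) case, and then Dirichlet's approximation theorem to control denominators when passing to arbitrary ellipsoids. As written, your non-effective criterion for irrational ellipsoids does not transfer to general $a$, so this reduction also needs to be replaced by a quantitative statement.
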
 

A recent paper \cite{CDPT} proposed a proof of Conjecture \ref{conj_ellipse}
(see also \cite{CS} which proved a strong closing property for ``$\gamma$-rigid'' Hamiltonian diffeomorphisms and deduced Conjecture \ref{conj_ellipse} from this result).  
The strategy of \cite{CDPT} is to verify a condition called ``vanishing of the contact homology spectral gap'', 
which is a version of (\ref{eqn_criterion}) with $\Phi = \Phi^{\CH}$. 
To prove this condition for all ellipsoids, 
the first step is to prove a stronger (effective) version of the condition for the periodic case (i.e. the case where all Reeb orbits are periodic), 
and the second step is to prove the condition for the general case by an approximation argument using Dirichlet's approximation theorem. 

In an old version of the present paper \cite{Irie_SCP}, the author reduced Conjecture \ref{conj_ellipse} 
to a numerical conjecture (\cite{Irie_SCP} Conjecture 5.3), 
which is based on the guess that only cylinders will appear in computing contact homology cobordism maps between (boundaries of) ellipsoids. 
The author now thinks that this guess is too naive and computing these cobordism maps is a highly nontrivial problem, 
even for the case $n=2$. 

\appendix
\section{Proof of Lemma \ref{lem_filtered_chain_complex}}

The proof given in this appendix is a straightforward generalization of
the argument in \cite{Hutchings_quantitative} Section 5 in a purely algebraic setting. 

\begin{lem}\label{lem_action_minimizing} 
Let $V$ be a filtered vector space such that
$\dim V>0$ and 
$\dim F^aV < \infty$ for any $a \in \R$. 
\begin{enumerate} 
\item[(i):] There exists a basis $(v_k)_{0 \le k < \dim V}$ of $V$ such that 
\[ 
|v_k| = \inf \{ a  \mid \dim F^a V >  k \} 
\] 
for any $k$. We call such a basis action-minimizing. 
\item[(ii):] 
Let $(v_k)_k$ be an action-minimizing basis of $V$. 
Then, the following holds: 
\begin{itemize} 
\item[(a):]  $(|v_k|)_k$ is nondecreasing, i.e. $|v_{k-1}| \le |v_k|$ for any $k \ge 1$. 
\item[(b):]  For any $a \in \R$, $F^a V = \spann \{ v_k \mid |v_k| < a\}$. 
\item[(c):]  For any $a \in \R$, $\{ v_k \mid |v_k| \ge a\}$ is linearly independent in $V/F^aV$. 
\end{itemize} 
\end{enumerate} 
\end{lem} 
\begin{proof} 
(i): 
Since $\dim F^aV<\infty$ for any $a$,
there exists a strictly increasing sequence $(a_i)_{i\ge 1}$ in $\R_{\ge 0}$ such that 
$\spec (V) = \{ a_i \mid i \ge 1\}$. 
Let $d_i:= \dim F^{a_i}V$ for any $i \ge 1$. 
Then $d_1=0$, and the sequence $(d_i)_{i \ge 1}$ is strictly increasing. 
Now we claim that a basis $(v_k)_k$ of $V$ is action-minimizing if and only if the following holds:
\begin{quote}
($\star$): $F^{a_i} V= \spann  \{ v_k \mid 0 \le k < d_i \}$ for any $i \ge 1$. 
\end{quote} 
It is easy to see that there exists a basis $(v_k)_k$ 
satisfying ($\star$). 
Thus this equivalence shows that there exists an action-minimizing basis. 

Suppose that ($\star$) holds. 
For any $0 \le k < \dim V$, 
take maximal $i$ so that $d_i \le k$. 
Then $v_k \in F^a V \iff a>a_i$, thus $|v_k|=a_i$. 
On the other hand $\dim F^a V > k \iff a > a_i$, thus 
$\inf\{ a \mid \dim F^a V  > k \}=a_i=|v_k|$, 
hence the basis $(v_k)_k$ is action-minimizing. 

On the other hand, suppose that a basis $(v_k)_k$ is action-minimizing. 
To verify ($\star$) we may assume $i \ge 2$, since $F^{a_1}V=0$ and $d_1=0$. 
For any $0 \le k < d_i$, we have 
\[
|v_k| = \inf \{ a \mid \dim F^a V  > k \} \le a_{i-1}, 
\] 
where the equality holds since $(v_k)_k$ is action-minimizing, 
and the inequality holds since if $a>a_{i-1}$ then 
$\dim F^a V \ge \dim F^{a_i} V  = d_i > k$. 
Hence $v_k \in F^{a_i}V$ since $a_i> a_{i-1}$. 
Now we have shown that 
$F^{a_i}V \supset \spann \{ v_k \mid 0 \le k < d_i \}$. 
Since both vector spaces have dimension $d_i$, ($\star$) holds. 
This completes the proof. 

(ii): 
(a) follows from the definition of action-minimizing basis. 
(b) follows from ($\star$). 
(c) follows from (b). 
\end{proof} 

The next lemma is the same as \cite{Hutchings_quantitative} Lemma 5.4. 

\begin{lem}\label{lem_cocycle} 
Let $(C, \del)$ be a chain complex, and let $C' \subset C$ be a subcomplex. 
Suppose $\alpha_1, \ldots, \alpha_n \in H_*(C)$ are linearly independent in 
$H_*(C)/H_*(C')$, and let $y_1, \ldots, y_n \in K$. Then there exists a cocycle $\zeta \in \Hom_K(C, K)$ 
which annihilates $C'$ and satisfies $H_*(\zeta)(\alpha_i)=y_i$ for each $i$. 
\end{lem} 

Let us now prove Lemma \ref{lem_filtered_chain_complex}. 
If $\dim H_*(V)=0$ or $\dim H_*(W)=0$ then this lemma is easy to check, 
thus we may assume that both $\dim H_*(V)$ and $\dim H_*(W)$ are positive. 
Let $(v_i)_i$ be an action-minimizing basis of $H_*(V)$, 
and let $(w_j)_j$ be an action-minimizing basis of $H_*(W)$. 
To prove Lemma \ref{lem_filtered_chain_complex}, 
it is sufficient to show 
\begin{equation}\label{eqn_FaH} 
F^a H_*(V \otimes W) = \spann \{ v_i \otimes w_j \mid |v_i| + |w_j|< a\} 
\end{equation} 
for any $a \in \R$. 
This is because there holds 
\[ 
F^b H_*(V) = \spann \{ v_i \mid |v_i|< b\}, \qquad
F^c H_*(W) = \spann \{ w_j \mid |w_j|<c\}
\] 
for any $b, c \in \R$ by Lemma \ref{lem_action_minimizing} (ii)-(b). 

For any $i$, $j$ such that $|v_i|+|w_j|<a$, 
there exists $\ep>0$ such that $|v_i|+|w_j|+2\ep \le a$. 
Then $v_i \in F^{|v_i|+\ep} H_*(V)$, 
thus there exists a cycle $\tilde{v}_i \in F^{|v_i|+\ep}V$ such that $[\tilde{v}_i]=v_i$. 
Similarly, there exists a cycle $\tilde{w}_j \in F^{|w_j|+\ep}W$ such that $[\tilde{w}_j] = w_j$. 
Then $\tilde{v}_i \otimes \tilde{w}_j \in F^a(V \otimes W)$ is a cycle which represents $v_i \otimes w_j$,
thus $v_i \otimes w_j \in F^a H_*(V \otimes W)$. 
Hence the RHS of (\ref{eqn_FaH}) is a subspace of $F^a H_*(V \otimes W)$. 

To prove that $F^a H_*(V \otimes W)$ is a subspace of the RHS of (\ref{eqn_FaH}), 
it is sufficient to show that,  
for any $u = \sum_{i,j} t_{ij}  \cdot v_i \otimes w_j \in F^a H_*(V \otimes W) \setminus \{0\}$, 
\[ 
a':=  \max \{ |v_i|+|w_j| \mid t_{ij} \ne 0 \} < a. 
\] 

Suppose that $a' \ge a$, 
and take $(i_0, j_0)$ so that 
$t_{i_0 j_0} \ne 0$ and 
$|v_{i_0}| + |w_{j_0}|=a'$. 
Also, take positive integers $I$ and $J$ so that 
$t_{ij} \ne 0 \implies i \le I, j \le J$. 
By Lemma \ref{lem_action_minimizing} (ii)-(c), 
$\{ v_i \mid  |v_i| \ge |v_{i_0}|\}$ 
is linearly independent in $H_*(V)/F^{|v_{i_0}|}H_*(V)$. 
Thus by Lemma \ref{lem_cocycle}, 
there exists a cocycle $\ph \in \Hom_K(V, K)$
which annihilates $F^{|v_{i_0}|}V$
and satisfies $H_*(\ph)(v_i) = \begin{cases} 1 &(i = i_0) \\ 0 &(i \ne i_0) \end{cases}$ for any $i \le I$. 
Similarly, there exists a cocycle $\psi \in \Hom_K(W, K)$ 
which annihilates $F^{|w_{j_0}|}W$
and satisfies $H_*(\psi)(w_j) = \begin{cases} 1 &(j=j_0) \\ 0 &(j \ne j_0) \end{cases}$ for any $j \le J$. 
The cocycle $\ph \otimes \psi \in \Hom_K(V \otimes W, K)$ 
annihilates $F^a(V \otimes W)$ 
since $|v_{i_0}| + |w_{j_0}| = a' \ge a$. 
Thus $H_*(\ph \otimes \psi)$ annihilates $F^a H_*(V \otimes W)$. 
On the other hand
\[ 
H_*(\ph \otimes \psi) (u)  = \sum_{i,j} t_{ij}  \cdot  H_*(\ph)(v_i) H_*(\psi)(w_j) = t_{i_0 j_0} \ne 0, 
\] 
which contradicts $u \in F^a H_*(V \otimes W)$. 
Thus we have shown (\ref{eqn_FaH}), which completes the proof of Lemma \ref{lem_filtered_chain_complex}. 
\qed


\begin{thebibliography}{ZZZ}

\bibitem{Asaoka_Irie} 
M. Asaoka, K. Irie, 
\textit{A $C^\infty$ closing lemma for Hamiltonian diffeomorphisms of closed surfaces},
Geom. Funct. Anal. 26 (2016), no. 5, 1245--1254.

\bibitem{Bao_Honda}
E. Bao, K. Honda, 
\textit{Semi-global Kuranishi charts and the definition of contact homology}, 
Adv. Math. 414 (2023). 

\bibitem{CDPT} 
J. Chaidez, I. Datta, R. Prasad, S. Tanny, 
\textit{Contact homology and the strong closing lemma for ellipsoids}, 
arXiv:2206.04738v2.

\bibitem{CS} 
 E. Cineli, S. Seyfaddini, 
\textit{The strong closing lemma and Hamiltonian pseudo-rotations}, 
arXiv:2210.00771v2. 

\bibitem{CGHR}
D. Cristofaro-Gardiner, M. Hutchings, V. G. B. Ramos, 
\textit{The asymptotics of ECH capacities}, 
Invent. Math. 199 (2015), no. 1, 187--214.

\bibitem{CGPPZ} 
D. Cristofaro-Gardiner, D. Pomerleano, R. Prasad, B. Zhang, 
\textit{A note on the existence of U-cyclic elements in periodic Floer homology}, 
arXiv:2110.13844v2. 

\bibitem{CGPZ} 
D. Cristofaro-Gardiner, R. Prasad, B. Zhang, 
\textit{The smooth closing lemma for area-preserving surface diffeomorphisms}, 
arXiv:2110.02925v4. 

\bibitem{Edtmair_Hutchings} 
O. Edtmair, M. Hutchings, 
\textit{PFH spectral invariants and $C^\infty$ closing lemmas}, 
arXiv:2110.02463v4. 

\bibitem{Fish_Hofer} 
J. Fish, H. Hofer, 
\textit{Almost existence from the feral perspective and some questions}, 
Ergodic Theory Dynam. Systems. 42 (2022), no. 2, 792--834.

\bibitem{Herman} 
M.-R. Herman, \textit{Exemples de flots hamiltoniens dont aucune perturbation en topologie $C^\infty$ n'a d'orbites p\'{e}riodiques sur un ouvert de surfaces d'\'{e}nergies}, 
C. R. Acad. Sci. Paris S\'{e}r. I Math. 312 (1991), 
no. 3, 989--994. 

\bibitem{Hofer_Zehnder} 
H. Hofer, E. Zehnder, 
\textit{Symplectic invariants and Hamiltonian dynamics}, 
Birkh\"{a}user Adv. Texts 
Basler Lehrb\"{u}cher, Birkh\"{a}user, Basel, 1994. 

\bibitem{Hutchings_quantitative} 
M. Hutchings, \textit{Quantitative embedded contact homology}, 
J. Differential Geom. 88 (2011), no. 2, 231--266.
  
\bibitem{Hutchings_Taubes_Chord_II} 
M. Hutchings, C. Taubes,
\textit{Proof of the Arnold chord conjecture in three dimensions, II}, 
Geom. Topol. 17 (2013), no. 5, 2601--2688.

\bibitem{Irie_dense}
K. Irie, \textit{Dense existence of periodic Reeb orbits and ECH spectral invariants}, 
J. Mod. Dyn. 9 (2015), 357--363.

\bibitem{Irie_remark}
K. Irie, \textit{Remarks about the $C^\infty$-closing lemma for $3$-dimensional Reeb flows}, 
Kyoto J. Math. 61 (2021), no. 2, 305--322.

\bibitem{Irie_SCP} 
K. Irie, \textit{Strong closing property of contact forms and action selecting functors}, 
arXiv:2201.09216v2. 

\bibitem{Ishikawa} 
S. Ishikawa, 
\textit{Construction of general symplectic field theory}, 
arXiv:1807.09455v3. 

\bibitem{Pardon}
J. Pardon, 
\textit{Contact homology and virtual fundamental cycles}, 
J. Amer. Math. Soc. 32 (2019), no. 3, 825--919.

\bibitem{Pugh_Robinson}
C. Pugh, C. Robinson, 
\textit{The $C^1$ closing lemma, including Hamiltonians}, 
Ergodic Theory Dynam. Systems 3 (1983), no. 2, 261--313.

\bibitem{Xue} 
J. Xue, 
\textit{Closing lemma and KAM normal form}, 
arXiv:2207.06208v2. 

\end{thebibliography}
\end{document}